\theoremstyle{plain}
\newtheorem{theorem}{Theorem}[section]
\newtheorem*{theorem*}{Theorem}
\newtheorem{definition}[theorem]{Definition}
\newtheorem{prop}[theorem]{Proposition}
\newtheorem*{prop*}{Proposition}
\newtheorem{cor}[theorem]{Corollary}
\newtheorem*{cor*}{Corollary}
\newtheorem{rem}[theorem]{Remark}
\newtheorem{ex}[theorem]{Example}
\newtheorem*{mt*}{Main Theorem}
\DeclareMathOperator{\Imm}{Im}
\newcommand{\rr}{\mathbb{R}}
\newcommand{\cc}{\mathbb{C}}
\newcommand{\del}{\partial}
\newcommand{\delbar}{\overline{\del}}
\begin{document}
\title[On the cohomology of almost-complex and symplectic manifolds...]{On the cohomology of almost-complex and symplectic manifolds and proper surjective maps}
\author{Nicoletta Tardini and Adriano Tomassini}
\date{\today}
\address{Dipartimento di Matematica\\
Universit\`{a} di Pisa \\
Largo Bruno Pontecorvo 5, 56127 \\
Pisa, Italy}
\email{tardini@mail.dm.unipi.it}
\address{Dipartimento di Matematica e Informatica\\
Universit\`{a} di Parma \\
Parco Area delle Scienze 53/A, 43124 \\
Parma, Italy}
\email{adriano.tomassini@unipr.it}
\thanks{Partially supported by GNSAGA
of INdAM}
\keywords{almost-complex structure; symplectic structure; pseudo-holomorphic map; cohomology; Hard-Lefschetz condition}
\subjclass[2010]{53C15; 53D05}
\begin{abstract}
Let $(X,J)$ be an almost-complex manifold. In \cite{li-zhang} Li and Zhang introduce
$H^{(p,q),(q,p)}_J(X)_{\rr}$ as the cohomology subgroups of the $(p+q)$-th de Rham cohomology group formed by classes represented by real pure-type forms.
Given a proper, surjective, pseudo-holomorphic map between two almost-complex manifolds we study the relationship among such cohomology groups.
Similar results are proven in the symplectic setting for the cohomology groups
introduced in \cite{tsengyauI} by Tseng and Yau
and a new characterization of the Hard Lefschetz condition
 in dimension $4$ is provided.
\end{abstract}

\maketitle
%\tableofcontents
\section*{Introduction}

On an almost-complex manifold $(X,J)$, Li and Zhang in \cite{li-zhang} introduce
the following subgroups of the de Rham cohomology
$$
H^{(p,q),(q,p)}_J(X)_{\rr}:=\left\lbrace [\alpha]\in H^{p+q}_{dR}(X;\rr)\vert\alpha\in \left(A^{p,q}_J(X)\oplus A^{q,p}_J(X)\right)\cap
A^{p+q}(X;\rr)\right\rbrace
$$
where, in particular, $H^{(1,1)}_J(X)_{\rr}$ and $H^{(2,0),(0,2)}_J(X)_{\rr}$ represent the $J$-invariant and $J$-anti-invariant cohomology groups. They have been introduced in order to study the relations between the tamed cone $\mathcal{K}_J^t$ and the compatible cone $\mathcal{K}_J^c$ on compact almost-complex manifolds.\\
More precisely, if $(X,J)$ is a compact almost-complex manifold admitting a compatible symplectic structure, i.e., $\mathcal{K}_J^c\neq\emptyset$, and $J$ is
$\mathcal{C}^\infty$-full (that is $H^2_{dR}(X;\rr)=H^{(1,1)}_J(X)_{\rr}+H^{(2,0),(0,2)}_J(X)_{\rr}$) then Li and Zhang prove in \cite[Theorem 1.1]{li-zhang} that
$$
\mathcal{K}_J^t=\mathcal{K}_J^c+H^{(2,0),(0,2)}_J(X)_{\rr}.
$$
Such a decomposition generalizes the following relation for compact K\"ahler manifolds
(see \cite[Corollary 3.2]{li-zhang})
$$
\mathcal{K}_J^t=\mathcal{K}_J^c+ \left(\left(H^{(2,0)}_{\overline\partial}(X)\oplus H^{(0,2)}_{\overline\partial}(X)\right)\cap H^2_{dR}(X;\rr)\right).
$$
In this sense $H^{(2,0),(0,2)}_J(X)_{\rr}$ can be viewed as a generalization of
$H^{(2,0)}_{\overline\partial}(X)\oplus H^{(0,2)}_{\overline\partial}(X)$ to
the non integrable case.

Recently in the symplectic setting Tseng and Yau in \cite{tsengyauI} introduce
new cohomology groups which can be viewed as the symplectic counterpart
of the Bott-Chern and the Aeppli cohomology groups on a complex manifold, namely for a symplectic manifold $(X,\omega)$, denoting by $d^\Lambda$ the
symplectic codifferential, they define
\[
H^k_{d+d^\Lambda}\left(X\right)
:=\frac{\ker(d+d^\Lambda)\cap A^k(X)}{\Imm dd^\Lambda\cap A^k(X)},
\]
\[
H^k_{dd^\Lambda}\left(X\right)
:=\frac{\ker(dd^\Lambda)\cap A^k(X)}{\left(\Imm d+\Imm d^\Lambda\right)\cap A^k(X)}
\]
as the symplectic Bott-Chern and Aeppli cohomology groups of $(X,\omega)$ respectively.\\
The aim of this paper is to study the relations among these groups under proper, surjective pseudo-holomorphic/symplectic-structure-preserving maps.
In the differentiable category in \cite{wells} Wells shows that if we have a proper surjective differentiable map between two manifolds, under more suitable hypothesis, we can compare the de Rham cohomology groups.
In particular, (\cite[Theorem 3.3]{wells})
let $\pi:\tilde X\longrightarrow X$ be a surjective proper differentiable map between two orientable, differentiable manifolds of the same dimension. If $\hbox{deg}\,\pi\neq 0$
then the induced map on cohomology
$$
\pi^*:H^\bullet_{dR}(X;\rr)\longrightarrow H^\bullet_{dR}(\tilde X;\rr)
$$
is injective.\\
Hence if $\tilde X$ and $X$ are compact, then we have the inequalities
$b_\bullet(X)\leq b_\bullet(\tilde X)$ on the Betti numbers of $X$ and $\tilde X$, respectively.\\
Moreover, for complex manifolds a natural holomorphic invariant is furnished by the Dolbeault cohomology groups and a similar result still holds, as proved by Wells (see \cite[Theorem 3.1]{wells}).
Namely, if we ask $\tilde X$ and $X$ to be complex manifolds of the same complex dimension and $\pi:\tilde X\longrightarrow X$ to be a surjective proper holomorphic map (we do not need to ask $\hbox{deg}\,\pi\neq 0$ anymore) then we have injections on the complex-valued de Rham cohomology groups and the Dolbeault cohomology groups, i.e.,
$$
\pi^*:H^\bullet_{dR}(X;\cc)\longrightarrow H^\bullet_{dR}(\tilde X;\cc)
$$
and
$$
\pi^*:H^{\bullet,\bullet}_{\overline\partial}(X)\longrightarrow
H^{\bullet,\bullet}_{\overline\partial}(\tilde X).
$$
are injective maps.\\
We recall that on a complex manifold $X$ we can consider other complex cohomology groups, which, in general, are not isomorphic to the Dolbeault cohomology, namely
the \emph{Bott-Chern cohomology groups} and the \emph{Aeppli cohomology groups} defined respectively as (see \cite{aeppli})
\[
H_{BC}^{\bullet,\bullet}(X):=
\frac{\hbox{\rm Ker}\del\cap
\hbox{\rm Ker}\overline{\partial}}{\hbox{\rm Im}\del\delbar},\qquad
H_{A}^{\bullet,\bullet}(X):=
\frac{\hbox{\rm Ker}\del\delbar}
{\hbox{\rm Im}\del+
\hbox{\rm Im}\delbar}.
\]
They can not be seen as the cohomology groups associated to a resolution of a sheaf, nevertheless they can be computed using currents as in the case of the de Rham and Dolbeault cohomology (cf. \cite{schweitzer}).
For these cohomologies in \cite[Theorem 3.1]{angella}, Angella proves that $\pi:\tilde X\longrightarrow X$ induces injections
$$
\pi^*:H^{\bullet,\bullet}_{BC}(X)\longrightarrow
H^{\bullet,\bullet}_{BC}(\tilde X),\qquad
\pi^*:H^{\bullet,\bullet}_{A}(X)\longrightarrow
H^{\bullet,\bullet}_{A}(\tilde X).
$$
Therefore, if $\tilde X$ and $X$ are compact then we have the inequalities
$h^{p,q}_{\delbar}(X)\leq h^{p,q}_{\delbar}(\tilde X)$,
$h^{p,q}_{BC}(X)\leq h^{p,q}_{BC}(\tilde X)$ and
$h^{p,q}_{A}(X)\leq h^{p,q}_{A}(\tilde X)$ where
$h^{p,q}_{\sharp}(X):=\dim_{\cc} H^{p,q}_{\sharp}(X)$,
$\sharp\in\left\lbrace\delbar, BC, A\right\rbrace$, for any $p,q$.\\
If $\tilde X$ and $X$ have different dimension the injectivity of $\pi^*$ among the Dolbeault cohomology groups is false in general (as shown by Wells considering the projection of the Hopf surface into $\mathbb{P}^1(\cc)$), unless $\tilde X$ is K\"ahler (or symplectic if we consider only the de Rham cohomology). Indeed, (see
\cite[ Theorem 4.1, Theorem 4.3]{wells})
if $\pi:\tilde X^{2m}\longrightarrow X^{2n}$ is a surjective, proper, differentiable map between two even-dimensional, orientable, differentiable manifolds and if $\tilde X$ admits a symplectic structure $\tilde \omega$ and $\pi_*(\tilde\omega^{m-n})\neq 0$ then the induced map on cohomology
$$
\pi^*:H^\bullet_{dR}(X;\rr)\longrightarrow H^\bullet_{dR}(\tilde X;\rr).
$$
is injective.\\
Moreover, if $\tilde X$ is a K\"ahler manifold, $X$ is a complex manifold and $\pi$ is
a holomorphic map then
$$
\pi^*:H^\bullet_{dR}(X;\cc)\longrightarrow H^\bullet_{dR}(\tilde X;\cc),\qquad
\pi^*:H^{\bullet,\bullet}_{\overline\partial}(X)\longrightarrow
H^{\bullet,\bullet}_{\overline\partial}(\tilde X).
$$
are injective maps.
The function $\mu=\pi_*(\tilde\omega^{m-n})$ is called by Wells in \cite{wells}
the \emph{symplectic degree of $\pi$} and it depends on the choice of the symplectic form on $\tilde X$.\\
We focus our attention on the cohomology groups $H^{(p,q),(q,p)}_J(X)_{\rr}$
and $H^k_{d+d^\Lambda}\left(X\right)$.
We prove the following results (see section \ref{results}):
\begin{theorem}
Let $\pi:(\tilde X,\tilde J)\longrightarrow (X,J)$ be a proper, surjective, pseudo-holomorphic map between two almost-complex manifolds of the same dimension.
Then,
$$
\pi^*:H^{(p,q),(q,p)}_J(X)_{\rr}\longrightarrow H^{(p,q),(q,p)}_{\tilde J}(\tilde X)_{\rr}
$$
is injective for any $p,q$.\\
In particular, if $\tilde X$ and $X$ are compact we have the inequalities
$$h^{(p,q),(q,p)}_J(X)\leq h^{(p,q),(q,p)}_{\tilde J}(\tilde X)$$
for any $p,q$.
\end{theorem}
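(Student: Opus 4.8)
The plan is to reduce the statement to Wells' Theorem 3.3 quoted in the introduction, so that the only substantive points to verify are that $\pi^*$ genuinely carries $H^{(p,q),(q,p)}_J(X)_{\rr}$ into $H^{(p,q),(q,p)}_{\tilde J}(\tilde X)_{\rr}$ (well-definedness of the map in the statement) and that $\deg\pi\neq 0$ (so that Wells applies). Both rely on elementary features of pseudo-holomorphic maps, and almost no extra machinery is needed.

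First I would check well-definedness. The defining relation $d\pi\circ\tilde J=J\circ d\pi$ forces $\pi^*$ to preserve the bidegree decomposition: if $\varphi\in A^{1,0}_J(X)$ then $(\pi^*\varphi)(\tilde J W)=\varphi(J\,d\pi\,W)=i\,(\pi^*\varphi)(W)$, whence $\pi^*\varphi\in A^{1,0}_{\tilde J}(\tilde X)$, and taking wedge products and conjugates yields $\pi^*\big(A^{p,q}_J(X)\big)\subseteq A^{p,q}_{\tilde J}(\tilde X)$ for all $p,q$. Since $\pi^*$ commutes with $d$ and with complex conjugation, it sends a closed real form in $\big(A^{p,q}_J(X)\oplus A^{q,p}_J(X)\big)\cap A^{p+q}(X;\rr)$ to a closed real form in $\big(A^{p,q}_{\tilde J}(\tilde X)\oplus A^{q,p}_{\tilde J}(\tilde X)\big)\cap A^{p+q}(\tilde X;\rr)$. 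Thus the restriction of the de Rham map $\pi^*$ to these subgroups lands where required.

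Next I would establish $\deg\pi\neq 0$. The structures $J$ and $\tilde J$ induce canonical orientations on $X$ and $\tilde X$, so both are orientable and Wells' hypotheses on orientability are automatic. By Sard's theorem the surjectivity of $\pi$ provides a regular value $y\in X$ with nonempty fibre, which is finite by properness. At each $x\in\pi^{-1}(y)$ the differential $d\pi_x$ is a complex-linear isomorphism, and every complex-linear isomorphism preserves the orientation determined by the complex structure (a complex basis $e_1,\dots,e_n$ yields the oriented real basis $e_1,Je_1,\dots,e_n,Je_n$, and $GL(n,\cc)$ is connected); hence the local degree at $x$ is $+1$. Summing over the fibre gives $\deg\pi=\#\pi^{-1}(y)\geq 1$, so $\deg\pi\neq 0$.

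With these facts the conclusion follows at once: by Wells' Theorem 3.3 the map $\pi^*:H^{p+q}_{dR}(X;\rr)\to H^{p+q}_{dR}(\tilde X;\rr)$ is injective, and since $H^{(p,q),(q,p)}_J(X)_{\rr}$ is by definition a subgroup of $H^{p+q}_{dR}(X;\rr)$ on which $\pi^*$ restricts to the map under consideration, injectivity is inherited. In the compact case injectivity of a linear map between finite-dimensional spaces gives the dimension inequality $h^{(p,q),(q,p)}_J(X)\leq h^{(p,q),(q,p)}_{\tilde J}(\tilde X)$. The main obstacle, and the only place where pseudo-holomorphicity is used beyond bidegree preservation, is the nonvanishing of the degree; the delicate point is to argue cleanly that $d\pi_x$ is orientation-preserving and that, for a proper map between possibly non-compact manifolds, the local degrees still assemble into a well-defined global degree to which Wells' projection-formula argument applies.
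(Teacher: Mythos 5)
Your proof is correct, and it reaches the conclusion by a genuinely more economical route than the paper. You observe that $H^{(p,q),(q,p)}_J(X)_{\rr}$ is by definition a linear subspace of $H^{p+q}_{dR}(X;\rr)$, that bidegree preservation of $\pi^*$ (from $d\pi\circ\tilde J=J\circ d\pi$) makes the map in the statement the restriction of the de Rham pullback to that subspace, and that injectivity is then inherited directly from Wells' Theorem 3.3 once $\deg\pi\neq0$ is secured. The paper instead re-runs Wells' duality mechanism at the pure-type level: it restricts the forms-to-currents diagram to $A^{(p,q),(q,p)}_{J}(X)_{\rr}$ and $\mathcal{D}^{J}_{(n-p,n-q),(n-q,n-p)}(X)_{\rr}$, invokes the identity $\mu\, i=\pi_*\,\tilde i\,\pi^*$ on this subdiagram, passes to (co)homology, and uses the injectivity of the induced maps $i_*$, $\tilde i_*$ to conclude. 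The substantive inputs are identical in both arguments (bidegree preservation, orientation preservation hence $\mu=\deg\pi\neq0$, and Wells' degree argument); what the paper's heavier diagrammatic setup buys is that it transfers almost verbatim to the different-dimension case (Theorem \ref{diversadimensione}), where one must replace $\pi_*$ by $\tau(T)=\pi_*(T\wedge\tilde\omega^{d})$ on currents and there is no longer a single de Rham injectivity statement to restrict. Your verification that $\deg\pi\neq0$ (complex-linear isomorphisms preserve the canonical orientations, so each local degree at a regular value is $+1$ and the fibre is nonempty and finite by surjectivity and properness) is precisely the justification the paper compresses into the one-line remark that $\pi$ preserves orientation, and your closing caveat about assembling local degrees for proper maps between non-compact manifolds is legitimately delegated to Wells, exactly as the paper does.
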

This Theorem can also be seen as a generalization of Proposition 4.3 in \cite{zhang} which involves only the $J$-invariant and $J$-anti-invariant cohomology groups.\\
For almost-complex manifolds of different dimension we prove the following
\begin{theorem}
Let $\pi:(\tilde X^{2m},\tilde J)\longrightarrow (X^{2n},J)$ be a proper, surjective, pseudo-holomorphic map between two almost-complex manifolds and suppose that
$(\tilde\omega,\tilde J)$ is an almost-K\"ahler structure on $\tilde X$.
Then,
$$
\pi^*:H^{(p,q),(q,p)}_J(X)_{\rr}\longrightarrow H^{(p,q),(q,p)}_{\tilde J}(\tilde X)_{\rr}.
$$
is injective for any $p,q$.\\
In particular, if $\tilde X$ and $X$ are compact we have the inequalities
$$h^{(p,q),(q,p)}_J(X)\leq h^{(p,q),(q,p)}_{\tilde J}(\tilde X)$$
for any $p,q$.
\end{theorem}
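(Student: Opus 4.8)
The plan is to deduce the claim from the injectivity of $\pi^{*}$ on real de Rham cohomology, which is exactly Wells' statement recalled in the Introduction (\cite[Theorem~4.1]{wells}), after supplying the two facts that adapt that result to the pure-type subgroups: that $\pi^{*}$ preserves the bidegree decomposition used to define $H^{(p,q),(q,p)}$, and that the symplectic degree $\mu=\pi_{*}(\tilde\omega^{m-n})$ is nonzero.

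First I would verify that the map is well defined. Since $\pi$ is pseudo-holomorphic, $d\pi\circ\tilde J=J\circ d\pi$, so $\pi^{*}$ carries $J$-forms of type $(1,0)$ and $(0,1)$ to $\tilde J$-forms of the same type; hence $\pi^{*}\big(A^{p,q}_{J}(X)\big)\subseteq A^{p,q}_{\tilde J}(\tilde X)$, and likewise in bidegree $(q,p)$, while $\pi^{*}$ preserves real forms. Therefore, for a closed $\alpha\in\big(A^{p,q}_{J}(X)\oplus A^{q,p}_{J}(X)\big)\cap A^{p+q}(X;\rr)$, the form $\pi^{*}\alpha$ is closed, real and of the same pure type, so $\pi^{*}[\alpha]\in H^{(p,q),(q,p)}_{\tilde J}(\tilde X)_{\rr}$; moreover the restriction of $\pi^{*}$ to $H^{(p,q),(q,p)}_{J}(X)_{\rr}$ commutes with the inclusions into $H^{p+q}_{dR}(\,\cdot\,;\rr)$.

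The crux is the nonvanishing of $\mu=\pi_{*}(\tilde\omega^{m-n})$, a closed $0$-current on $X$. On the dense open set of regular values (nonempty by Sard, since $\pi$ is surjective) it is represented by the smooth function $x\mapsto\int_{\pi^{-1}(x)}\tilde\omega^{m-n}$, the fibers being compact by properness. Pseudo-holomorphicity makes each such fiber $\tilde J$-invariant: if $v$ is tangent to $\pi^{-1}(x)$ then $d\pi(\tilde J v)=J\,d\pi(v)=0$, so $\tilde J v$ is tangent to the fiber too. Thus $\pi^{-1}(x)$ is an almost-complex submanifold, and since $(\tilde\omega,\tilde J)$ is almost-K\"ahler the restriction $\tilde\omega|_{\pi^{-1}(x)}$ is again compatible with $\tilde J|_{\pi^{-1}(x)}$, i.e. the fundamental form of an almost-Hermitian structure on the $2(m-n)$-dimensional fiber. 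Its top power is a positive volume form, so $\mu(x)=\int_{\pi^{-1}(x)}\big(\tilde\omega|_{\pi^{-1}(x)}\big)^{m-n}>0$. Being a closed $0$-current on the (connected) $X$, $\mu$ is constant, hence a positive constant.

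With $\tilde\omega$ symplectic and $\mu\neq 0$, Wells' theorem gives that $\pi^{*}:H^{\bullet}_{dR}(X;\rr)\to H^{\bullet}_{dR}(\tilde X;\rr)$ is injective; restricting to $H^{(p,q),(q,p)}_{J}(X)_{\rr}$ and using the second paragraph yields the asserted injectivity, and in the compact case the inequality $h^{(p,q),(q,p)}_{J}(X)\le h^{(p,q),(q,p)}_{\tilde J}(\tilde X)$ follows at once. One may also argue directly, without quoting Wells: if $\pi^{*}\alpha=d\beta$ then, using $d\tilde\omega=0$ together with the projection formula and $d\pi_{*}=\pi_{*}d$, one gets $\mu\,\alpha=\pi_{*}(\pi^{*}\alpha\wedge\tilde\omega^{m-n})=d\,\pi_{*}(\beta\wedge\tilde\omega^{m-n})$, so $\alpha$ is exact because $\mu$ is a nonzero constant. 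I expect the only genuine obstacle to be the nonvanishing of $\mu$: for an arbitrary symplectic form on $\tilde X$ the symplectic degree may well vanish, and it is precisely the compatibility of $\tilde\omega$ with $\tilde J$, combined with the pseudo-holomorphicity of $\pi$, that forces it to be positive.
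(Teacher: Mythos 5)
Your proof is correct and follows the same route as the paper: Wells' symplectic-degree mechanism, i.e.\ the map $\tau(T)=\pi_*(T\wedge\tilde\omega^{\,m-n})$ and the relation $\mu\, i=\tau\,\tilde i\,\pi^*$ with $\mu=\pi_*(\tilde\omega^{\,m-n})$. Two differences in execution are worth recording. First, the paper restricts the commutative diagram to the bigraded real currents $\mathcal{D}^{\tilde J}_{(m-p,m-q),(m-q,m-p)}(\tilde X)_{\rr}$, using that the compatible form $\tilde\omega$ is of type $(1,1)$ so that $\tau$ preserves the bigrading, and then reruns the equidimensional argument inside the pure-type groups; you instead observe that $H^{(p,q),(q,p)}_J(X)_{\rr}$ is by definition a subgroup of $H^{p+q}_{dR}(X;\rr)$, so injectivity of $\pi^*$ on de Rham cohomology (Wells, Theorem 4.1) together with the type-preservation of $\pi^*$ already yields the claim; this shortcut is valid and slightly cleaner. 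Second, and more substantially, you prove that $\mu>0$ by showing that the regular fibers are $\tilde J$-invariant (using pseudo-holomorphicity) and that the compatible $\tilde\omega$ restricts to a positive volume form on them, then that the closed degree-zero current $\pi_*(\tilde\omega^{\,m-n})$ is a positive constant. The paper leaves this step implicit, deferring to Wells and to ``the same argument'' as in the equidimensional case, even though the nonvanishing of $\mu$ is precisely where the almost-K\"ahler hypothesis enters (its necessity being the point of Example \ref{etabeta}); making it explicit is the most valuable part of your write-up. One small gloss in your alternative ``direct'' argument: $\pi_*(\beta\wedge\tilde\omega^{\,m-n})$ is a current, not a form, so the identity $\mu\,\alpha=d\,\pi_*(\beta\wedge\tilde\omega^{\,m-n})$ shows $\alpha$ is exact only in the sense of currents, and you still need the quasi-isomorphism $A^{\bullet}(X)\to\mathcal{D}_{2n-\bullet}(X)$ to conclude $[\alpha]=0$ in $H^{p+q}_{dR}(X;\rr)$ --- which is exactly how the paper closes the equidimensional case.
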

In section \ref{example} we provide an example
showing that the almost-K\"ahler assumption in the theorem above can not 
be dropped.\\
We obtain a similar result in the symplectic case under the further assumption
that one of the involved manifolds satisfies the Hard Lefschetz condition (see Theorem \ref{symplectic}). Furthermore we give a characterization of this condition for a compact symplectic $4$-manifold in terms of symplectic Bott-Chern numbers
(see Theorem \ref{numeri}), namely
\begin{theorem}
Let $(X^4,\omega)$ be a compact symplectic $4$-manifold, then
it satisfies the Hard Lefschetz condition if and only if
\[
b_2(X)=\dim H^2_{d+d^\Lambda}(X).
\]
\end{theorem}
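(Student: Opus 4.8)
The plan is to translate the statement into the vanishing of the \emph{symplectic Bott--Chern defects} $\delta_k:=\dim H^k_{d+d^\Lambda}(X)-b_k(X)$, and to show that on a compact symplectic $4$-manifold all of them except possibly $\delta_2$ vanish identically. First I would collect the general facts available on a compact symplectic manifold $(X^{2n},\omega)$, all contained in or immediate from \cite{tsengyauI}: the Hard Lefschetz condition is equivalent to the $dd^\Lambda$-lemma; the symplectic star operator $*_s$, which satisfies $*_s*_s=\mathrm{id}$ and $d^\Lambda=(-1)^{k+1}*_s\,d\,*_s$ on $k$-forms, induces isomorphisms $H^k_{d+d^\Lambda}(X)\cong H^{2n-k}_{d+d^\Lambda}(X)$, so that $\delta_k=\delta_{2n-k}$; the pairing $([\alpha],[\beta])\mapsto\int_X\alpha\wedge\beta$ between $H^k_{d+d^\Lambda}(X)$ and $H^{2n-k}_{dd^\Lambda}(X)$ is nondegenerate, whence $\dim H^k_{d+d^\Lambda}(X)=\dim H^k_{dd^\Lambda}(X)$; and the Frölicher-type inequality $\dim H^k_{d+d^\Lambda}(X)+\dim H^k_{dd^\Lambda}(X)\ge 2b_k(X)$ holds, with equality in every degree exactly when the $dd^\Lambda$-lemma holds. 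Putting these together yields $\delta_k\ge 0$ for all $k$ and the equivalence: HLC holds if and only if $\delta_k=0$ for every $k$.

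For $n=2$ the bookkeeping then collapses. A direct inspection gives $\delta_0=0$, since on a connected $X$ the only $d$- and $d^\Lambda$-closed $0$-forms are the constants while $dd^\Lambda A^0=0$; by the $*_s$-duality $\delta_4=\delta_0=0$ and $\delta_3=\delta_1$. Hence HLC is equivalent to $\delta_1=\delta_2=0$, and the theorem reduces to proving that $\delta_1=0$ on \emph{every} compact symplectic $4$-manifold: granting this, HLC holds if and only if $\delta_2=0$, i.e. $b_2(X)=\dim H^2_{d+d^\Lambda}(X)$.

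The core of the proof is therefore the identity $\dim H^1_{d+d^\Lambda}(X)=b_1(X)$. I would argue as follows. For a $1$-form $\gamma$ one has $d^\Lambda\gamma=-\Lambda\,d\gamma$, so every closed $1$-form is automatically $d^\Lambda$-closed; thus the numerator defining $H^1_{d+d^\Lambda}(X)$ coincides with the space of closed $1$-forms, exactly as for $H^1_{dR}(X)$, and since $dd^\Lambda A^1\subseteq dA^0$ one gets $\delta_1=\dim\bigl(dA^0/\,dd^\Lambda A^1\bigr)$. As $d$ is injective modulo constants and $d^\Lambda A^1$ is contained in the subspace $A^0_0$ of functions of zero average against $\omega^2/2$ (by Stokes), this equals $\dim\mathrm{coker}\bigl(d^\Lambda\colon A^1\to A^0_0\bigr)$. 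Now I would apply $*_s$: from $d^\Lambda=*_s\,d\,*_s$ on $1$-forms and $*_s*_s=\mathrm{id}$ one obtains $*_s\circ d^\Lambda=d\circ *_s$ as maps $A^1\to A^4$, while $*_s$ carries $A^1$ onto $A^3$ and $A^0_0$ onto the space of exact $4$-forms. Consequently the above cokernel is isomorphic to $\mathrm{coker}\bigl(d\colon A^3\to B^4\bigr)$, with $B^4$ the exact $4$-forms, and this is zero because on a compact connected oriented $4$-manifold a top-degree form is exact precisely when its integral vanishes. Hence $\delta_1=0$.

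Combining the three paragraphs gives HLC $\iff\delta_2=0$, which is the assertion. The step I expect to be the main obstacle is not the four-dimensional computation but the rigorous setup of the general machinery underlying the reduction: the finite-dimensionality of $H^\bullet_{d+d^\Lambda}(X)$ and $H^\bullet_{dd^\Lambda}(X)$, the nondegeneracy of the $*_s$-pairing that forces $\dim H^k_{d+d^\Lambda}=\dim H^k_{dd^\Lambda}$, and above all the equality case of the Frölicher-type inequality, which is what promotes the vanishing of all defects to the $dd^\Lambda$-lemma and thence to HLC. A secondary but genuine point of care is to fix the sign conventions for $d^\Lambda$ and $*_s$ so that each identification in the degree-one computation is exact rather than merely up to sign.
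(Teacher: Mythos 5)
Your proposal is correct and follows essentially the same route as the paper: reduce, via the non-negativity of $\tilde\Delta^k:=h^k_{d+d^\Lambda}-b_k$, the duality $\tilde\Delta^k=\tilde\Delta^{2n-k}$, and the characterization of HLC by the vanishing of all these defects, to the single claim $\tilde\Delta^1=0$. The only divergence is in the injectivity half of that claim: the paper invokes the Tseng--Yau Hodge decomposition for $\Delta_{d^\Lambda}$ to write $f=c+d^\Lambda\beta$, whereas you derive the same decomposition by conjugating $d^\Lambda$ with the symplectic star $\star$ and using that a top-degree form on a compact connected oriented manifold is exact exactly when its integral vanishes --- a more elementary justification of the same step.
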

This result is the symplectic analogue of a result proved in \cite{angella-dlousski-tomassini} for compact complex surfaces. We also do explicit computations
of the symplectic cohomologies on compact $4$-dimensional solvmanifolds.

\noindent {\em Acknowledgements.} The authors would like to thank
Tian-Jun Li and Weiyi Zhang for useful comments and for pointing out the reference \cite{zhang}.

\smallskip

\section{Preliminaries}\label{preliminaries}

We start by fixing some notations and recalling some well-known results about relations between cohomologies of manifolds related by proper surjective maps.\\
If $Y$ is an orientable differential manifold of dimension $m$,
we denote by $A^r(Y)$ the space of differential $r$-forms on $Y$ and by $\mathcal{D}_{m-r}(Y)$ the space of currents of dimension $m-r$ (or of degree $r$)
on $Y$, i.e. the topological dual of the space of differential $(m-r)$-forms with compact support in $Y$.\\ 
Let $\tilde X$ and $X$ be two orientable, differentiable manifolds of the same dimension $m$
and let $\pi:\tilde X\longrightarrow X$ be a surjective proper differentiable map between them. We recall that the \emph{degree of $\pi$} is defined as $deg\pi:=\pi_*(1)$ where $\pi_*$ is the map induced by $\pi$ on top-dimension currents
$$
\pi_*:\mathcal{D}_m(\tilde X)\longrightarrow \mathcal{D}_m(X).
$$
If $\pi$ is orientation preserving then $deg\pi\neq 0$, in particular it is positive
(cf. \cite{wells}).
If we consider the diagram
$$
\xymatrixcolsep{5pc}\xymatrix{
A^{r}(\tilde X) \ar[r]^{\tilde i} &
\mathcal{D}_{m-r}(\tilde X)\ar[d]^{\pi_*}\\
A^{r}(X) \ar[u]^{\pi^*}\ar[r]^{i}   &
\mathcal{D}_{m-r}(X)
}
$$
Wells prove in \cite[Lemma 2.3]{wells} that this is commutative up to the degree of $\pi$, namely $\mu i=\pi_*\tilde i\pi^*$ with $\mu:=deg\pi$. Since in the induced diagram on (co)homology
$$\xymatrixcolsep{5pc}\xymatrix{
H^{r}_{dR}(\tilde X;\rr) \ar[r]^{\tilde i_*} &
H_{m-r}(\tilde X;\rr)\ar[d]^{\pi_*}\\
H^{r}_{dR}(X;\rr) \ar[u]^{\pi^*}\ar[r]^{i_*}   &
H_{m-r}(X;\rr);
}
$$
the maps $\tilde i_*$ and $i_*$ are isomorphisms Wells proves that
$$
\pi^*:H^\bullet_{dR}(X;\rr)\longrightarrow H^\bullet_{dR}(\tilde X;\rr)
$$
is injective.\\
Similar arguments in the complex case show a comparison between the Dolbeault, Bott-Chern and Aeppli cohomology groups.\\
In order to give similar results in the almost-complex case we consider two differentiable manifolds $\tilde X$ and $X$ of the same dimension $2n$, endowed with two (non necessarily integrable) almost complex structures $\tilde J$ and
$J$, respectively. We fix the orientation induced by $\tilde J$ on $\tilde X$ and by $J$ on $X$. Let $\pi:(\tilde X,\tilde J)\longrightarrow (X,J)$ be a proper, surjective, pseudo-holomorphic map between the two considered manifolds.
Notice that, by the psudo-holomorphic assumption, $\mu:=deg\pi\neq 0$, indeed $\pi$ preserves the orientation. Therefore, under this hypothesis, the map $\pi$ induces an injection on the de-Rham cohomology groups (\cite[Theorem 3.3]{wells})
$$
\pi^*:H^\bullet_{dR}(X;\rr)\longrightarrow H^\bullet_{dR}(\tilde X;\rr).
$$  
In particular, if $\tilde X$ and $X$ are compact, we have the inequalities $b_\bullet(X)\leq b_\bullet(\tilde X)$ on the Betti numbers.\\
If we consider the bi-grading induced by the almost-complex structures on the complexes of forms $\left(A^{\bullet,\bullet}_{\tilde J}(\tilde X),d\right)$ and $\left(A^{\bullet,\bullet}_{J}(X),d\right)$, we have that $\pi$ is bigrading preserving, i.e.,
$$
\pi^*:A^{p,q}_J(X)\longrightarrow A^{p,q}_{\tilde J}(\tilde X).
$$
and
$$
\pi_*:\mathcal{D}_{n-p,n-q}^{\tilde J}(\tilde X)\longrightarrow \mathcal{D}_{n-p,n-q}^{J}(X).
$$
for any $p,q$.\\
We set (see \cite{fino-tomassini})
$$
A^{(p,q),(q,p)}_{J}(X)_{\rr}:=\left(A^{p,q}_J(X)\oplus A^{q,p}_J(X)\right)\cap
A^{p+q}(X;\rr)
$$
and
$$
H^{(p,q),(q,p)}_J(X)_{\rr}:=\left\lbrace [\alpha]\in H^{p+q}_{dR}(X;\rr)\vert\alpha\in A^{(p,q),(q,p)}_{J}(X)_{\rr}\right\rbrace;
$$
a similar definition (and notation) can be given for currents. If $X$ is compact, set
$h^{(p,q),(q,p)}_J(X):=\dim H^{(p,q),(q,p)}_J(X)_{\rr}$.\\
In particular, $H^{(1,1)}_J(X)_{\rr}$ and $H^{(2,0),(0,2)}_J(X)_{\rr}$ represent the $J$-invariant and $J$-anti-invariant part of the second de Rham cohomology group of $X$ (see \cite{li-zhang} for further details), where $J$ acts on the space of $k$-forms in the following way:
$$
J\alpha(v_1,\ldots,v_k):=\alpha(Jv_1,\ldots,Jv_k).
$$
By commutation relations with the differential operator $d$ on forms and currents the maps $\pi^*$ and $\pi_*$ can be induced in (co)homology obtaining
$$
\pi^*:H^{(p,q),(q,p)}_J(X)_{\rr}\longrightarrow H^{(p,q),(q,p)}_{\tilde J}(\tilde X)_{\rr}
$$
and
$$
\pi_*:H_{(n-p,n-q),(n-q,n-p)}^{\tilde J}(\tilde X)_{\rr}\longrightarrow H_{(n-p,n-q),(n-q,n-p)}^{J}(X)_{\rr}.
$$
We recall the following definition
\begin{definition}
An almost-complex structure $J$ on a differential manifold  $X$ is called
\begin{itemize}
\item[-] \emph{$\mathcal{C}^\infty$-pure} if 
\[
H_J^{(1,1)}(X)_{\rr}\cap H_J^{(2,0)(0,2)}(X)_{\rr}=\left\lbrace 0\right\rbrace,
\]
\item[-]  \emph{$\mathcal{C}^\infty$-full} if
\[
H^2_{dR}(X;\rr)= H_J^{(1,1)}(X)_{\rr}+H_J^{(2,0)(2,0)}(X)_{\rr},
\]
\item[-] \emph{$\mathcal{C}^\infty$-pure and full} if it is $\mathcal{C}^\infty$-pure and
$\mathcal{C}^\infty$-full, i.e., if
\[
H^2_{dR}(X;\rr)=H_J^{(1,1)}(X)_{\rr}\oplus H_J^{(2,0)(2,0)}(X)_{\rr}.
\]
\end{itemize}
\end{definition}
Examples of $\mathcal{C}^\infty$-pure and full structures are K\"ahler structures.
More in general, Draghici, Li and Zhang prove that any
almost complex structure on a compact $4$-manifold is $\mathcal{C}^\infty$-pure and full (see \cite[Theorem 2.3]{draghici-li-zhang}). 

\section{The almost-complex case}\label{results}

We start by proving the natural generalization of Wells' Theorem in this context.
\begin{theorem}\label{injectivity}
Let $\pi:(\tilde X,\tilde J)\longrightarrow (X,J)$ be a proper, surjective, pseudo-holomorphic map between two almost-complex manifolds of the same dimension.
Then,
$$
\pi^*:H^{(p,q),(q,p)}_J(X)_{\rr}\longrightarrow H^{(p,q),(q,p)}_{\tilde J}(\tilde X)_{\rr}
$$
is injective for any $p,q$.\\
In particular, if $\tilde X$ and $X$ are compact we have the inequalities
$$h^{(p,q),(q,p)}_J(X)\leq h^{(p,q),(q,p)}_{\tilde J}(\tilde X)$$
for any $p,q$.
\end{theorem}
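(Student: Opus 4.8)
The plan is to reduce the statement to Wells' injectivity result for de Rham cohomology recalled in the preliminaries, exploiting the fact that $H^{(p,q),(q,p)}_J(X)_{\rr}$ is by definition a \emph{subgroup} of $H^{p+q}_{dR}(X;\rr)$ and that $\pi^*$ acts on it simply as the restriction of the de Rham pullback. In this way no new analytic input is needed: everything follows by combining the bigrading preservation of $\pi^*$ with the de Rham injectivity.

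First I would verify that $\pi^*$ actually sends $H^{(p,q),(q,p)}_J(X)_{\rr}$ into $H^{(p,q),(q,p)}_{\tilde J}(\tilde X)_{\rr}$, i.e.\ that the restricted map is well defined. Take a class $[\alpha]\in H^{(p,q),(q,p)}_J(X)_{\rr}$ with representative $\alpha\in A^{(p,q),(q,p)}_J(X)_{\rr}$, so that $\alpha$ is a real, $d$-closed form lying in $A^{p,q}_J(X)\oplus A^{q,p}_J(X)$. Since $\pi$ is pseudo-holomorphic, $\pi^*$ is bigrading preserving, whence $\pi^*\alpha\in A^{p,q}_{\tilde J}(\tilde X)\oplus A^{q,p}_{\tilde J}(\tilde X)$; moreover $\pi^*\alpha$ is again real and, because $\pi^*$ commutes with $d$, again $d$-closed. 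Thus $\pi^*\alpha\in A^{(p,q),(q,p)}_{\tilde J}(\tilde X)_{\rr}$ and $\pi^*[\alpha]=[\pi^*\alpha]\in H^{(p,q),(q,p)}_{\tilde J}(\tilde X)_{\rr}$.

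For injectivity I would argue as follows. Both $H^{(p,q),(q,p)}_J(X)_{\rr}$ and $H^{(p,q),(q,p)}_{\tilde J}(\tilde X)_{\rr}$ are subgroups of the corresponding de Rham cohomology groups, so a class is trivial in the subgroup precisely when it is trivial in $H^{p+q}_{dR}$. Suppose $\pi^*[\alpha]=0$ in $H^{(p,q),(q,p)}_{\tilde J}(\tilde X)_{\rr}$; then $\pi^*[\alpha]=0$ in $H^{p+q}_{dR}(\tilde X;\rr)$ as well. By the pseudo-holomorphicity of $\pi$ we have $\mu:=\deg\pi\neq 0$, so Wells' Theorem (\cite[Theorem 3.3]{wells}) guarantees that $\pi^*:H^{p+q}_{dR}(X;\rr)\to H^{p+q}_{dR}(\tilde X;\rr)$ is injective; therefore $[\alpha]=0$ in $H^{p+q}_{dR}(X;\rr)$, and hence $[\alpha]=0$ already in the subgroup $H^{(p,q),(q,p)}_J(X)_{\rr}$.

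The content here lies in the compatibility of the two inputs rather than in any hard computation: the step that does the work is the bigrading preservation of $\pi^*$, which is exactly what forces $\pi^*$ to respect the pure-type splitting and hence to restrict to the Li--Zhang subgroups; once this is in place the injectivity is inherited verbatim from the de Rham statement. The only point requiring a little care is to note that vanishing in the subgroup coincides with vanishing in the ambient de Rham group, which is immediate since these are genuine linear subspaces and not quotients. The dimension inequality in the compact case is then just the rank inequality for an injective linear map.
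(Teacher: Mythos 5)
Your proof is correct. The key observation you exploit --- that $H^{(p,q),(q,p)}_J(X)_{\rr}$ is by definition a linear \emph{subspace} of $H^{p+q}_{dR}(X;\rr)$, so that a class vanishes in it exactly when it vanishes as a de Rham class, and $\pi^*$ on it is just the restriction of the de Rham pullback --- is valid, and once the well-definedness is checked via bigrading preservation, injectivity is inherited verbatim from Wells' Theorem 3.3. This is a genuinely more direct route than the paper's: the authors instead restrict the Wells forms-to-currents diagram to real pure-type forms and bigraded currents, verify that $\mu\, i=\pi_*\tilde i\pi^*$ still holds on this subcomplex, pass to (co)homology, and re-run the degree argument there, using that the maps $i_*,\tilde i_*$ into the current homology groups are injective. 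The two arguments rest on the same underlying inputs ($\deg\pi\neq 0$ from pseudo-holomorphicity, plus the duality between forms and currents), but yours black-boxes Wells' conclusion while theirs reproves it in the bigraded setting. What the paper's longer version buys is a template that still functions when the target groups are \emph{not} subgroups of de Rham cohomology, and, more to the point here, it is exactly the diagram that gets reused in Theorem \ref{diversadimensione} for manifolds of different dimensions, where one must replace $\pi_*$ by the map $\tau(T)=\pi_*(T\wedge\tilde\omega^{d})$ on currents and the shortcut through plain de Rham injectivity is no longer automatic. For the equidimensional statement as posed, your argument is complete.
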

\begin{proof}
Consider the following diagram
$$\xymatrixcolsep{5pc}\xymatrix{
A^{(p,q),(q,p)}_{\tilde J}(\tilde X)_{\rr} \ar[r]^{\tilde i} &
\mathcal{D}_{(n-p,n-q),(n-q,n-p)}^{\tilde J}(\tilde X)_{\rr}\ar[d]^{\pi_*}\\
A^{(p,q),(q,p)}_{J}(X)_{\rr} \ar[u]^{\pi^*}\ar[r]^{i}   &
\mathcal{D}_{(n-p,n-q),(n-q,n-p)}^{J}(X)_{\rr};
}
$$
this is commutative up to the degree of $\pi$, indeed we are just restricting the diagram considered in Section \ref{preliminaries}
$$
\xymatrixcolsep{5pc}\xymatrix{
A^{r}(\tilde X) \ar[r]^{\tilde i} &
\mathcal{D}_{m-r}(\tilde X)\ar[d]^{\pi_*}\\
A^{r}(X) \ar[u]^{\pi^*}\ar[r]^{i}   &
\mathcal{D}_{m-r}(X)
}
$$
to elements in $A^{(p,q),(q,p)}_{J}(X)_{\rr}$. Therefore, we have
that the equality $\mu i=\pi_*\tilde i\pi^*$ (where $\mu=deg\pi$) holds, and this can be induced in (co-)homology in the following diagram
$$\xymatrixcolsep{5pc}\xymatrix{
H^{(p,q),(q,p)}_{\tilde J}(\tilde X)_{\rr} \ar[r]^{\tilde i_*} &
H_{(n-p,n-q),(n-q,n-p)}^{\tilde J}(\tilde X)_{\rr}\ar[d]^{\pi_*}\\
H^{(p,q),(q,p)}_{J}(X)_{\rr} \ar[u]^{\pi^*}\ar[r]^{i_*}   &
H_{(n-p,n-q),(n-q,n-p)}^{J}(X)_{\rr}.
}
$$
The maps $\tilde i_*$ and $i_*$ in this last diagram are injective, indeed they are induced by the quasi-isomorphism
$A^\bullet(Y)\longrightarrow \mathcal{D}_{2n-\bullet}(Y)$ given by $\varphi\mapsto \int_Y\varphi\wedge\cdot$ with $Y=X,\tilde X$.\\
Using this fact we have that $\pi^*$ is injective, indeed
let $a\in H^{(p,q),(q,p)}_{J}(X)_{\rr}$ and
suppose that $\pi^*a=0$, then $\mu i_*a=\pi_*\tilde i_*\pi^*a=0$. Since $\mu\neq 0$, then $i_*a=0$ and by injectivity we can conclude that $a=0$, proving the assertion.
\end{proof}
As a consequence for $(p,q)=(1,1)$ and $(p,q),(q,p)=(2,0),(0,2)$, namely the $J$-invariant and the $J$-anti-invariant cases, we recover the result proved by Zhang in  \cite[Proposition 4.3]{zhang}. More precisely, we get the following
\begin{cor}
\cite[Proposition 4.3]{zhang}
Let $\pi:(\tilde X,\tilde J)\longrightarrow (X,J)$ be a surjective, pseudo-holomorphic map between two compact almost-complex manifolds of the same dimension.
Then, $h^+_J(X)\leq h^+_{\tilde J}(\tilde X)$ and $h^-_J(X)\leq h^-_{\tilde J}(\tilde X)$.
\end{cor}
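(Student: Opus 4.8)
The plan is to read this off directly from Theorem~\ref{injectivity}, since $h^+_J$ and $h^-_J$ are by definition the dimensions of two particular members of the family $H^{(p,q),(q,p)}_J(X)_{\rr}$ treated there. Indeed, $H^+_J(X)$ is the $J$-invariant part of $H^2_{dR}(X;\rr)$, which is precisely $H^{(1,1)}_J(X)_{\rr}$, while $H^-_J(X)$ is the $J$-anti-invariant part, which is precisely $H^{(2,0),(0,2)}_J(X)_{\rr}$; hence $h^+_J = h^{(1,1)}_J$ and $h^-_J = h^{(2,0),(0,2)}_J$ on both $X$ and $\tilde X$.

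Before invoking the theorem I would check that its hypotheses hold. The only point not literally stated in the corollary is properness of $\pi$; but since $\tilde X$ and $X$ are both compact, every continuous map between them is automatically proper (the preimage of a compact set is a closed subset of the compact space $\tilde X$, hence compact). So Theorem~\ref{injectivity} applies without change.

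It then suffices to specialise the indices. Taking $(p,q)=(1,1)$ gives the injectivity of $\pi^*\colon H^{(1,1)}_J(X)_{\rr}\to H^{(1,1)}_{\tilde J}(\tilde X)_{\rr}$, and an injective linear map cannot raise dimension, so $h^+_J(X)\le h^+_{\tilde J}(\tilde X)$. Taking $(p,q)=(2,0)$, whence $(q,p)=(0,2)$, gives the injectivity of $\pi^*\colon H^{(2,0),(0,2)}_J(X)_{\rr}\to H^{(2,0),(0,2)}_{\tilde J}(\tilde X)_{\rr}$, so $h^-_J(X)\le h^-_{\tilde J}(\tilde X)$. There is no real obstacle here: all the substantive work has already been carried out in Theorem~\ref{injectivity}, and the corollary amounts only to matching Zhang's notation $h^\pm_J$ to the two lowest-degree instances of $h^{(p,q),(q,p)}_J$.
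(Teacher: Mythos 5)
Your proposal is correct and follows exactly the paper's route: the corollary is obtained by specialising Theorem~\ref{injectivity} to $(p,q)=(1,1)$ and $(p,q),(q,p)=(2,0),(0,2)$ and using that an injective linear map does not decrease dimension. Your added observation that properness is automatic between compact manifolds is a sensible (and correct) check that the paper leaves implicit.
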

We can now prove the following proposition which generalizes Proposition 3.3 in \cite{fino-tomassini} given in the integrable context.
\begin{prop}
Let $\pi:(\tilde X,\tilde J)\longrightarrow (X,J)$ be a proper, surjective, pseudo-holomorphic map between two almost-complex manifolds of the same dimension.
If $\tilde J$ is $\mathcal{C}^\infty$-pure than $J$ is $\mathcal{C}^\infty$-pure too.
\end{prop}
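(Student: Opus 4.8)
The plan is to reduce the statement to the injectivity result already established in Theorem \ref{injectivity}, exploiting the fact that $\pi^*$ is bigrading preserving at the level of forms. Recall that $J$ being $\mathcal{C}^\infty$-pure means precisely that
\[
H_J^{(1,1)}(X)_{\rr}\cap H^{(2,0),(0,2)}_J(X)_{\rr}=\{0\},
\]
so I would start by fixing a class $a$ lying in this intersection and set out to prove $a=0$. By hypothesis $a\in H^2_{dR}(X;\rr)$ admits a representative in $A^{1,1}_J(X)\cap A^2(X;\rr)$ as well as a representative in $\left(A^{2,0}_J(X)\oplus A^{0,2}_J(X)\right)\cap A^2(X;\rr)$.

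The central observation is that, since $\pi$ is pseudo-holomorphic, the pullback satisfies $\pi^*:A^{p,q}_J(X)\to A^{p,q}_{\tilde J}(\tilde X)$ for every $p,q$, as already noted in Section \ref{preliminaries}. Consequently $\pi^*$ carries a real $(1,1)$-representative to a real $(1,1)$-representative on $\tilde X$, and a real $(2,0)+(0,2)$-representative to a form of the same pure type; it also commutes with $d$, so it sends closed forms to closed forms. Therefore the class $\pi^*a$ simultaneously belongs to $H_{\tilde J}^{(1,1)}(\tilde X)_{\rr}$ and to $H^{(2,0),(0,2)}_{\tilde J}(\tilde X)_{\rr}$, i.e.\ it lies in their intersection. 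Invoking the $\mathcal{C}^\infty$-purity of $\tilde J$ forces $\pi^*a=0$.

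At this point I would apply Theorem \ref{injectivity}: the induced map $\pi^*:H^{(p,q),(q,p)}_J(X)_{\rr}\to H^{(p,q),(q,p)}_{\tilde J}(\tilde X)_{\rr}$ is injective for all $p,q$, so $\pi^*a=0$ yields $a=0$, giving the desired vanishing of the intersection and hence the $\mathcal{C}^\infty$-purity of $J$. There is no serious obstacle in this argument; the only point requiring a little care is to confirm that the bidegree-preservation of $\pi^*$ genuinely upgrades to an inclusion of the corresponding cohomology subgroups (so that $\pi^*a$ really sits in the intersection on $\tilde X$), after which the injectivity statement of Theorem \ref{injectivity} does the remaining work.
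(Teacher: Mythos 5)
Your argument is correct and is essentially the paper's own proof, just phrased directly rather than by contradiction: both rest on the two facts that $\pi^*$ preserves the bidegree of representatives (so a class in $H^{(1,1)}_J(X)_{\rr}\cap H^{(2,0),(0,2)}_J(X)_{\rr}$ is pushed into the corresponding intersection on $\tilde X$) and that $\pi^*$ is injective in cohomology. The paper invokes Wells' injectivity on de Rham cohomology while you invoke its restriction to the subgroups via Theorem \ref{injectivity}; these are interchangeable here, so there is nothing substantive to distinguish the two arguments.
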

\begin{proof}
As already observed
$$
\pi^*:H^\bullet_{dR}(X;\rr)\longrightarrow H^\bullet_{dR}(\tilde X;\rr)
$$
is injective and bi-grading preserving.
By contradiction, assume that there exists $a\in H^{(1,1)}_J(X)_{\rr}\cap H^{(2,0),(0,2)}(X)_{\rr}$, $a\neq 0$. Hence $a=[\alpha]=[\beta]$ with $\alpha\in A_J^{(1,1)}(X)_{\rr}\cap \ker d$
and $\beta\in A_J^{(2,0),(2,0)}(X)_{\rr}\cap \ker d$,
and $\pi^*\alpha=\pi^*\beta+d(\pi^*\gamma)$ on $\tilde X$. By injectivity, $0\neq [\pi^*\alpha]=[\pi^*\beta]\in
H^{(1,1)}_J(X)_{\rr}\cap H^{(2,0),(0,2)}(X)_{\rr}$ and this is absurd, since $\tilde J$ is $\mathcal{C}^\infty$-pure by hypothesis.
\end{proof}
\begin{rem}
A similar result can be obtained considering \emph{pure} almost complex structures, in the sense of currents and \emph{complex-$\mathcal{C}^{\infty}$-pure} almost complex structures (cf. \cite{li-zhang} for the precise definitions).
\end{rem}
If $\tilde X$ and $X$ have different dimension the injectivity of $\pi^*$ is no more true in general, as we will show in Example \ref{etabeta}.
Nevertheless if $\tilde X$ has a symplectic form compatible with the almost complex structure the injectivity of $\pi^*$ is guaranteed. Indeed, if we consider the case when the manifolds $\tilde X$ and $X$ have different dimensions, say $2m$ and $2n$ respectively, we can use a compatible symplectic form on $(\tilde X, \tilde J)$ to carry currents of type $(m-p,m-q)$ on $\tilde X$ to currents of type $(n-p,n-q)$ on $X$. We prove the following
\begin{theorem}\label{diversadimensione}
Let $\pi:(\tilde X^{2m},\tilde J)\longrightarrow (X^{2n},J)$ be a proper, surjective, pseudo-holomorphic map between two almost-complex manifolds and suppose that
$(\tilde\omega,\tilde J)$ is an almost-K\"ahler structure on $\tilde X$.
Then,
$$
\pi^*:H^{(p,q),(q,p)}_J(X)_{\rr}\longrightarrow H^{(p,q),(q,p)}_{\tilde J}(\tilde X)_{\rr}.
$$
is injective for any $p,q$.\\
In particular, if $\tilde X$ and $X$ are compact we have the inequalities
$$h^{(p,q),(q,p)}_J(X)\leq h^{(p,q),(q,p)}_{\tilde J}(\tilde X)$$
for any $p,q$.
\end{theorem}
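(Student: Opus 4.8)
The plan is to reproduce the diagram-chase of Theorem \ref{injectivity}, compensating for the dimension gap $2m-2n$ by inserting powers of the symplectic form $\tilde\omega$. Since $\pi$ is surjective we have $m\geq n$, and we may assume $m>n$ (the case $m=n$ being Theorem \ref{injectivity}). The operator that will replace the left vertical arrow $\pi^*$ is
\[
L:=\tilde\omega^{m-n}\wedge\pi^*(-):A^{(p,q),(q,p)}_{J}(X)_{\rr}\longrightarrow A^{(p+m-n,q+m-n),(q+m-n,p+m-n)}_{\tilde J}(\tilde X)_{\rr}.
\]
Indeed $\pi^*$ preserves bidegree and reality while $\tilde\omega$ is a real $(1,1)$-form, so $L$ lands in the indicated space of real pure-type forms. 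Applying $\tilde i$ to the target produces currents of dimension $(n-p,n-q),(n-q,n-p)$ on $\tilde X$, exactly the dimensions occurring on the $X$-side, and $\pi_*$ carries these to currents of the same bidimension on $X$. This realizes the idea of using $\tilde\omega$ to carry currents of type $(m-p,m-q)$ on $\tilde X$ to currents of type $(n-p,n-q)$ on $X$.

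First I would verify that the resulting square commutes up to the \emph{symplectic degree} $\mu:=\pi_*(\tilde\omega^{m-n})$. For a test form $\psi$ on $X$ and $\alpha\in A^{(p,q),(q,p)}_{J}(X)_{\rr}$,
\[
\bigl(\pi_*\tilde i\,L\alpha\bigr)(\psi)=\int_{\tilde X}\tilde\omega^{m-n}\wedge\pi^*\alpha\wedge\pi^*\psi=\int_{\tilde X}\tilde\omega^{m-n}\wedge\pi^*(\alpha\wedge\psi)=\int_X\mu\,\alpha\wedge\psi,
\]
the last step being the projection formula $\int_{\tilde X}\pi^*\eta\wedge\tilde\omega^{m-n}=\int_X\eta\wedge\pi_*(\tilde\omega^{m-n})$. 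Hence $\pi_*\tilde i\,L=\mu\, i$ at the level of forms and currents. Moreover, since $d\tilde\omega=0$ the operator $\tilde\omega^{m-n}\wedge(-)$ commutes with $d$, and so does $\pi^*$; thus $L$ sends closed (resp.\ exact) forms to closed (resp.\ exact) forms and descends to a map $L_*$ on the pure-type cohomology groups, giving $\mu\, i_*=\pi_*\tilde i_* L_*$ in cohomology.

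The one genuinely new point, and the main obstacle, is to show $\mu\neq 0$; this is where the almost-K\"ahler hypothesis enters. Because $d\tilde\omega=0$ and $\pi_*$ commutes with $d$, the current $\mu=\pi_*(\tilde\omega^{m-n})$ of degree $0$ is closed; on $X$ (which we may assume connected) a closed degree-$0$ current annihilates all exact top-forms, hence factors through $H^{2n}_{dR}(X;\rr)\cong\rr$ and is a constant, namely the integral $\int_F\tilde\omega^{m-n}$ over a generic fibre $F$. To see this constant is positive I would use pseudo-holomorphicity: the relation $d\pi\circ\tilde J=J\circ d\pi$ shows $\ker d\pi$ is $\tilde J$-invariant, so over a regular value the fibre $F$ is a $\tilde J$-invariant submanifold of real dimension $2(m-n)$; compatibility of $\tilde\omega$ with $\tilde J$ then forces $\tilde\omega^{m-n}\vert_F$ to be a positive volume form, whence $\int_F\tilde\omega^{m-n}>0$. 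As regular values are dense by Sard's theorem and $\pi$ is surjective, $\mu>0$.

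Finally I would run the same chase as in Theorem \ref{injectivity}. The maps $i_*$ and $\tilde i_*$ are injective, being induced by the quasi-isomorphisms $A^\bullet(X)\to\mathcal{D}_{2n-\bullet}(X)$ and $A^\bullet(\tilde X)\to\mathcal{D}_{2m-\bullet}(\tilde X)$, $\varphi\mapsto\int\varphi\wedge\cdot$, restricted to real pure-type forms. Given $a=[\alpha]\in H^{(p,q),(q,p)}_{J}(X)_{\rr}$ with $\pi^*a=0$, write $\pi^*\alpha=d\beta$; then $\tilde\omega^{m-n}\wedge\pi^*\alpha=d(\tilde\omega^{m-n}\wedge\beta)$ is exact, so $L_*a=0$. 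Therefore $\mu\,i_*a=\pi_*\tilde i_*L_*a=0$, and since $\mu\neq0$ and $i_*$ is injective we conclude $a=0$. This gives injectivity of $\pi^*$, and the inequalities $h^{(p,q),(q,p)}_J(X)\leq h^{(p,q),(q,p)}_{\tilde J}(\tilde X)$ for compact $\tilde X,X$ follow at once.
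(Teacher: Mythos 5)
Your proposal is correct and follows essentially the same route as the paper: both bridge the dimension gap with the operator $T\mapsto\pi_*(T\wedge\tilde\omega^{m-n})$ (your $L$ is the paper's $\tau\circ\tilde i\circ\pi^*$ reorganized on the form side) and then rerun the equi-dimensional diagram chase. The only difference is that you explicitly prove the symplectic degree $\mu=\pi_*(\tilde\omega^{m-n})$ is positive via $\tilde J$-invariance of the fibres and compatibility of $\tilde\omega$, a point the paper leaves implicit by appealing to Wells.
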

\begin{proof}
Set $d:=m-n>0$ and notice that the map induced by $\pi$ on currents is given by
$$
\pi_*:\mathcal{D}_{m-p-d,m-q-d}^{\tilde J}(\tilde X)_{\rr}\longrightarrow \mathcal{D}_{n-p,n-q}^{J}(X)_{\rr}.
$$
As in \cite{wells} we define the map
$$
\tau:\mathcal{D}_{m-p,m-q}^{\tilde J}(\tilde X)_{\rr}\longrightarrow \mathcal{D}_{n-p,n-q}^{J}(X)_{\rr}
$$
as $\tau(T):=\pi_*(T\wedge\tilde\omega^d)$; this is well defined since
$\tilde\omega$ is of type $(1,1)$ with respect to the bigrading induced by $\tilde J$
on forms.\\
We recall that, in general, if $\tilde X$ admits a symplectic form, then there exists a
constant $\mu$ (the symplectic degree of $\pi$) such that
$\mu i=\tau\tilde i\pi^*$ in the diagram (\cite{wells})
$$
\xymatrixcolsep{5pc}\xymatrix{
A^{r}(\tilde X) \ar[r]^{\tilde i} &
\mathcal{D}_{m-r}(\tilde X)\ar[d]^{\tau}\\
A^{r}(X) \ar[u]^{\pi^*}\ar[r]^{i}   &
\mathcal{D}_{n-r}(X).
}
$$
In particular, if we restrict to real $(p,q)+(q,p)$-forms we get that the same equality holds in the diagram
$$\xymatrixcolsep{5pc}\xymatrix{
A^{(p,q),(q,p)}_{\tilde J}(\tilde X)_{\rr} \ar[r]^{\tilde i} &
\mathcal{D}_{(m-p,m-q),(m-q,m-p)}^{\tilde J}(\tilde X)_{\rr}\ar[d]^{\tau}\\
A^{(p,q),(q,p)}_{J}(X)_{\rr} \ar[u]^{\pi^*}\ar[r]^{i}   &
\mathcal{D}_{(n-p,n-q),(n-q,n-p)}^{J}(X)_{\rr}.
}
$$
Since $\pi^*$ and $\tau$ commute with the differential operator $d$, we have that the previous equality still holds passing to (co)homology
$$\xymatrixcolsep{5pc}\xymatrix{
H^{(p,q),(q,p)}_{\tilde J}(\tilde X)_{\rr} \ar[r]^{\tilde i_*} &
H_{(m-p,m-q),(m-q,m-p)}^{\tilde J}(\tilde X)_{\rr}\ar[d]^{\tau}\\
H^{(p,q),(q,p)}_{J}(X)_{\rr} \ar[u]^{\pi^*}\ar[r]^{i_*}   &
H_{(n-p,n-q),(n-q,n-p)}^{J}(X)_{\rr}.
}
$$
With the same argument used when $\tilde X$ and $X$ have the same dimension we can conclude that $\pi^*$ is injective.
\end{proof}
\begin{rem}
With similar considerations we can obtain the same results considering bi-graded complex-valued forms and, in this case, when the almost complex structure $J$ on $X$
is integrable $H^{(p,q)}_{J}(X)_{\cc}$ is the the image of
the natural map $H^{p,q}_{BC}(X)\longrightarrow H^{p+q}_{dR}(X;\cc)$ induced by the identity.
\end{rem}

\section{Example}\label{example}

The following example shows that the almost-K\"ahler assumption in Theorem \ref{diversadimensione} is crucial.
\begin{ex}\label{etabeta}
Consider the holomorphically parallelizable  complex nilmanifold of real dimension $10$ defined in \cite{alessandrini-bassanelli} as the quotient of
$$
G:=\left\lbrace \left[\begin{matrix}
1 & z^1 & z^3 & z^5\\
0 & 1 & 0 & z^2\\
0 & 0 &1 & z^4\\
0 & 0 & 0 & 1
\end{matrix}\right]
\mid z^1,z^2,z^3, z^4, z^5\in\cc\right\rbrace
$$
and the subgroup $\Gamma$ of matrices with entries in $\mathbb{Z}[i]$. The complex structure equations on $\eta\beta_5:=\frac{G}{\Gamma}$ are
\[
\left\{\begin{array}{rcl}
            d\varphi^1 &=&   0 \\[5pt]
            d\varphi^2 &=&   0  \\[5pt]
            d\varphi^3 &=&   0  \\[5pt]
            d\varphi^4 &=&   0  \\[5pt]
            d\varphi^5 &=&   -\varphi^1\wedge\varphi^2-\varphi^3\wedge\varphi^4
       \end{array}\right. \;.
\]
We recall that $\eta\beta_5$ is a $\mathcal{C}^\infty$-pure-and-full manifold that does not admit any K\"ahler structure and we want to show that the conclusion in Theorem \ref{diversadimensione} does not hold in this case.\\
Indeed, we can consider the natural projection on the first four coordinates
$$
\pi:\eta\beta_5\longrightarrow \mathbb{T}^4_{\cc};
$$ this is a proper surjective holomorphic map if we consider the torus with the standard complex structure $J_0$. We recall that in \cite{angella-tomassini} the $J$-invariant and $J$ anti-invariant numbers of $\eta\beta_5$ are computed: in particular $h_J^{(1,1)}(\eta\beta_5)=16$ and
$h_J^{(2,0)(0,2)}(\eta\beta_5)=10$. It is easy to see that
$h_{J_0}^{(1,1)}(\mathbb{T}^4_{\cc})=16$ and
$h_{J_0}^{(2,0)(0,2)}(\mathbb{T}^4_{\cc})=12$. Hence we have $h_J^{(2,0)(0,2)}(\eta\beta_5)<
h_{J_0}^{(2,0)(0,2)}(\mathbb{T}^4_{\cc})$ showing that
$$
\pi^*:H^{(2,0),(0,2)}_{J_0}(\mathbb{T}^4_{\cc})_{\rr}\longrightarrow H^{(2,0),(0,2)}_{\tilde J}(\eta\beta_5)_{\rr}
$$
is not injective.
\end{ex}

\section{Symplectic cohomologies}

On a symplectic manifold it is possible to define other cohomology groups which are strictly related to the symplectic structure. Therefore we ask if, under suitable hypothesis, a proper surjective map between two symplectic manifolds induces an injection on these symplectic cohomology groups. We give a partial answer to this question requiring that at least one of the two manifolds satisfies the Hard Lefschetz condition.
In order to better understand the following results we start by recalling some known facts from complex geometry. Let $X$ be a compact complex manifold,
then in \cite{angella-tomassini2} Angella and the second author define the following integers
$$
\Delta^k:=\sum_{p+q=k}h^{p,q}_{BC}(X)+\sum_{p+q=k}h^{p,q}_{A}(X)-2b_k(X)\geq0,   \qquad k\in\mathbb{Z},
$$
showing that their triviality characterizes the $\del\delbar$-lemma. Moreover, when $X$ is a compact complex surface in \cite{angella-dlousski-tomassini} and \cite{angella-tomassini-verbitsky} it is proven that $\Delta^1=0$ and $\Delta^2\in\left\lbrace 0,2\right\rbrace$. Since on compact complex surfaces K\"ahlerianity is equivalent to the $\del\delbar$-lemma holding on $X$, an immediate consequence is that a compact complex surface is K\"ahler if and only if $\Delta^2=0$.
Furthermore for a compact complex surface $X$ K\"ahlerianity is topologically characterized by the first Betti number $b_1$ (namely $b_1$ is even if and only if $X$ is K\"ahler if and only if $\Delta^2=0$, or equivalently $b_1$ is odd if and only if $X$ is non-K\"ahler if and only if $\Delta^2=2$). Since $b_1$ is a bimeromorphic invariant for complex manifolds we get the following
\begin{theorem}\label{modification}
Let $\pi:\tilde X\longrightarrow X$ a proper modification between two compact complex surfaces, then $\Delta^k$ is invariant under the map $\pi$.
\end{theorem}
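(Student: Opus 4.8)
The plan is to reduce the statement to the bimeromorphic invariance of a single topological quantity. The idea is that, for a compact complex surface, the integers $\Delta^k$ are nonzero in at most one degree, and that degree detects precisely whether the surface is K\"ahler; since K\"ahlerianity is governed by the parity of $b_1$, which is preserved under proper modifications, the invariance of $\Delta^k$ will follow.

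First I would check that $\Delta^k(Y)=0$ for every $k\neq 2$ and every compact complex surface $Y$. In degrees $0$ and $4$ this is immediate, since $h^{0,0}_{BC}=h^{0,0}_{A}=h^{2,2}_{BC}=h^{2,2}_{A}=b_0=b_4=1$, so $\Delta^0=\Delta^4=0$. The vanishing $\Delta^1=0$ is recalled in the text; combining the Bott--Chern--Aeppli duality $h^{p,q}_{BC}(Y)=h^{n-p,n-q}_{A}(Y)$ (with $n=2$, cf. \cite{schweitzer}) with Poincar\'e duality $b_k=b_{2n-k}$ yields the symmetry $\Delta^{k}=\Delta^{2n-k}$, whence $\Delta^3=\Delta^1=0$. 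Thus $k=2$ is the only degree in which $\Delta^k$ can be nontrivial, and there $\Delta^2\in\{0,2\}$.

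Next I would invoke the equivalences recalled above: on a compact complex surface $Y$ one has $\Delta^2(Y)=0$ if and only if $Y$ is K\"ahler, if and only if $b_1(Y)$ is even. Hence the whole collection $\{\Delta^k(Y)\}_k$ is determined by the single datum of the parity of $b_1(Y)$. Finally, a proper modification $\pi:\tilde X\to X$ is in particular a bimeromorphic map, and $b_1$ is a bimeromorphic invariant of compact complex manifolds, so $b_1(\tilde X)=b_1(X)$ have the same parity. Applying the equivalence to both surfaces gives $\Delta^2(\tilde X)=\Delta^2(X)$, while $\Delta^k(\tilde X)=\Delta^k(X)=0$ for all $k\neq 2$, which is the assertion.

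The main point to get right is not a computation but the two structural inputs: the duality $\Delta^k=\Delta^{2n-k}$, which lets me discard the degrees $k=3,4$, and the surface-specific chain ``$Y$ K\"ahler $\iff \Delta^2(Y)=0 \iff b_1(Y)$ even'', which reduces everything to the parity of $b_1$. Both are available from the recalled results, and once they are in place the conclusion is immediate from the bimeromorphic invariance of $b_1$.
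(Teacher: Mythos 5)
Your argument is correct and follows essentially the same route as the paper: reduce everything to $\Delta^2$, use the surface-specific chain ``$\Delta^2=0 \iff$ K\"ahler $\iff b_1$ even,'' and conclude from the bimeromorphic invariance of $b_1$. Your explicit treatment of the degrees $k=0,3,4$ via the duality $\Delta^k=\Delta^{2n-k}$ is a welcome bit of extra care that the paper leaves implicit.
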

This Theorem is not true in higher dimension; the first example is due to Hironaka  who constructed a non-K\"ahler modification of $\cc\mathbb{P}^3$.\\
Let $(X,\omega)$ be a symplectic manifold, then Tseng and Yau in \cite{tsengyauI} define a symplectic version of the Bott-Chern and the Aeppli cohomology groups. If we denote with
$\star:A^\bullet(X)\longrightarrow A^{2n-\bullet }(X)$
the \emph{symplectic-$\star$-Hodge operator} (see \cite{brylinski}) and with
$\Lambda:A^\bullet(X)\longrightarrow A^{\bullet -2}(X)$ the adjoint of the
Lefschetz operator $L:A^\bullet(X)\longrightarrow A^{\bullet +2}(X)$,
the \emph{Brylinski co-differential} is defined as
\[
d^\Lambda:=\left[d,\Lambda\right]=d\Lambda-\Lambda d=(-1)^{k+1}\star d\star.
\]
Then, the \emph{$d^\Lambda$-cohomology groups} are
\[
H^k_{d^\Lambda}\left(X\right)
:=\frac{\ker(d^\Lambda)\cap A^k(X)}{\Imm d^\Lambda\cap A^k(X)},
\]
the \emph{symplectic Bott-Chern cohomology groups} are
\[
H^k_{d+d^\Lambda}\left(X\right)
:=\frac{\ker(d+d^\Lambda)\cap A^k(X)}{\Imm dd^\Lambda\cap A^k(X)}
\]
and the \emph{symplectic Aeppli cohomology groups} are
\[
H^k_{dd^\Lambda}\left(X\right)
:=\frac{\ker(dd^\Lambda)\cap A^k(X)}{\left(\Imm d+\Imm d^\Lambda\right)\cap A^k(X)}.
\]
If we consider a compatible triple $(\omega,J,g)$ on $X$
(meaning that the almost-complex structure $J$ is $\omega$-calibrated and $g$ is the respective Riemannian metric on $X$) then, denoting with $*$ the standard \emph{Hodge-operator}
with respect to the Riemannian metric $g$,
there are canonical isomorphisms (see \cite{tsengyauI})
\[
\mathcal{H}^k_{d^\Lambda}\left(X\right):=\ker\Delta_{d^\Lambda}
\simeq H^k_{d^\Lambda}\left(X\right),
\]
where $\Delta_{d^{\Lambda}} :=  d^{\Lambda*}d^\Lambda+d^\Lambda d^{\Lambda*}$ is a second-order elliptic self-adjoint
differential operator and
\[
\mathcal{H}^k_{d+d^\Lambda}\left(X\right):=\ker\Delta_{d+d^\Lambda}
\simeq H^k_{d+d^\Lambda}\left(X\right),\qquad
\mathcal{H}^k_{dd^\Lambda}\left(X\right):=\ker\Delta_{dd^\Lambda}
\simeq H^k_{dd^\Lambda}\left(X\right).
\]
where
$\Delta_{d+d^{\Lambda}}$, $\Delta_{dd^{\Lambda}}$ are fourth-order elliptic self-adjoint
differential operators defined by
\[
\begin{array}{lcl}
\Delta_{d+d^{\Lambda}}& := &(dd^{\Lambda})(dd{^\Lambda})^*+(dd^{\Lambda})^*(dd^{\Lambda})+
d^*d^{\Lambda} d^{\Lambda *}d+d^{\Lambda *}d d^*d^{\Lambda}+d^*d+d^{\Lambda *}d^{\Lambda},\\[10pt]
\Delta_{dd^{\Lambda}} & :=& (dd^{\Lambda})(dd{^\Lambda})^*+(dd^{\Lambda})^*(dd^{\Lambda})+
dd^{\Lambda *}d^\Lambda d^*+d^\Lambda d^*dd^{\Lambda *}+dd^*
+d^\Lambda d^{\Lambda *}.
\end{array}
\]
In particular, the symplectic cohomology groups are finite-dimensional vector spaces on a compact
symplectic manifold. For
$\sharp\in\left\{d^\Lambda,d+d^\Lambda,
dd^\Lambda\right\}$ we denote $h^\bullet_\sharp:=:h^\bullet_\sharp(X):= \dim H^\bullet_\sharp(X)<\infty$ when the manifold $X$ is understood.\\
%We will call $\alpha\in A^\bullet(X)$ \emph{$\Delta_{d+d^\Lambda}$-harmonic} if
%$\Delta_{d+d^\Lambda}\alpha=0$ and, similarly,
%\emph{$\Delta_{dd^\Lambda}$-harmonic} if
%$\Delta_{dd^\Lambda}\alpha=0$. By definitions of the Laplacian operators
%we see immediately that $\alpha$ is $\Delta_{d+d^\Lambda}$-harmonic
%if and only if $d\alpha=d^\Lambda\alpha=(dd^\Lambda)^*\alpha=0$; and
%$\alpha$ is $\Delta_{dd^\Lambda}$-harmonic
%if and only if $d^*\alpha=d^{\Lambda *}\alpha=dd^\Lambda\alpha=0$.\\
Historically the differential forms closed both for the operators
$d$ and $d^\Lambda$ were called
\emph{symplectic harmonic} (\cite{brylinski}).
The existence of a symplectic harmonic form in each de Rham
cohomology class requires that the \emph{hard-Lefschetz property} holds on $(X^{2n},\omega)$
(cf. \cite{mathieu}), i.e., 
\[
L^k:H^{n-k}_{dR}(X)\longrightarrow H^{n+k}_{dR}(X), \qquad 0\leq k\leq n
\]
are all isomorphisms, and also the uniqueness of a symplectic harmonic representative
does not occur in general.
In particular, the following facts are equivalent on a compact symplectic
manifold $(X^{2n},\omega)$ (cf. \cite{brylinski}, \cite{mathieu}, \cite{merkulov},
\cite{yan}, \cite{cavalcanti})
\begin{itemize}
\item{} the hard-Lefschetz condition (HLC for short) holds;
\item{} the \emph{Brylinski conjecture}, i.e., the existence of a
symplectic harmonic form in each de Rham cohomology class;
\item{} the \emph{$dd^\Lambda$-lemma}, i.e., every $d^\Lambda$-closed, $d$-exact form
is also $dd^\Lambda$-exact;
\item{} the natural maps induced by the identity $H^{\bullet}_{d+d^\Lambda}(X)
\longrightarrow H^{\bullet}_{dR}(X)$ are injective;
\item{} the natural maps induced by the identity $H^{\bullet}_{d+d^\Lambda}(X)
\longrightarrow H^{\bullet}_{dR}(X)$ are surjective;
\item{} the natural maps induced by the identity in the following diagram are isomorphisms
$$ \xymatrix{
  & H^{\bullet}_{d+d^\Lambda}(X) \ar[ld]\ar[rd] & \\
  H^{\bullet}_{dR}(X) \ar[rd] &  & H^{\bullet}_{d^\Lambda}(X). \ar[ld] \\
  & {\phantom{\;.}} H^{\bullet}_{dd^\Lambda}(X) \; &
} $$
\end{itemize}
In this sense $H^{\bullet}_{d+d^\Lambda}\left(X\right)$ and
$H^{\bullet}_{dd^\Lambda}\left(X\right)$ represent more appropriate cohomologies
talking about existence and uniqueness of harmonic representatives.\\
Neverthless, in general, on a symplectic manifold of dimension $2n$ the following maps
are all isomorphisms (see \cite{tsengyauI} Prop. 3.24)
$$\xymatrixcolsep{5pc}\xymatrix{
\mathcal{H}^{k}_{d+d^\Lambda}\left(X\right) \ar[d]^{L^{n-k}} \ar[r]^{*} &
\mathcal{H}^{2n-k}_{dd^\Lambda}\left(X\right)\ar[d]^{\Lambda^{n-k}}\\
\mathcal{H}^{2n-k}_{d+d^\Lambda}\left(X\right) \ar[r]^{*}   &
\mathcal{H}^{k}_{dd^\Lambda}\left(X\right),
}
$$
in particular, it follows that $h^k_{d+d^\Lambda}=h^{2n-k}_{d+d^\Lambda}=
h^k_{dd^\Lambda}=h^{2n-k}_{dd^\Lambda}$ for all $k=0,\ldots, 2n$.

\subsection{Characterization of the $dd^\Lambda$-lemma}

In \cite{angella-tomassini3} Angella and the second author, starting from a purely
algebraic point of view,
introduce on a compact symplectic manifold $(X^{2n},\omega)$
the following integers 
\[
\Delta^k:= h^k_{d+d^\Lambda}+h^k_{dd^\Lambda}-2b_k\geq 0,
\qquad k\in\mathbb{Z},
\]
proving that, similarly to the complex case, their triviality characterizes the $dd^\Lambda$-lemma.
In this sense these numbers measure the HLC-degree of a symplectic manifold,
as their analogue in the complex case do (cf. \cite{angella-tomassini2}).\\
Now, as already observed in \cite{chansuen},
using the equality $\dim H^\bullet_{d+d^\Lambda}=
\dim H^\bullet_{dd^\Lambda}$ proved in \cite{tsengyauI}, we can write the non-HLC degrees
as follows
\[
\Delta^k=2(h^k_{d+d^\Lambda}-b_k),\qquad k\in\mathbb{Z};
\]
therefore we can simplify them, considering just the difference between the dimensions of the
Bott-Chern and the de Rham cohomology groups. We define
\[
\tilde\Delta^k:=h^k_{d+d^\Lambda}-b_k,\qquad k\in\mathbb{Z}.
\]
Notice that a similar simplification can not be done in the complex case (cf. \cite{schweitzer}).
We put in evidence that, by duality, $\tilde\Delta^k=\tilde\Delta^{2n-k}$,
$k=0,\ldots 2n$; for a compact symplectic manifold $(X,\omega)$
of dimension $2n$ we will refer
to $\tilde\Delta^k$, $k=0\ldots n$, as the \emph{non-HLC-degrees} of $X$.
Note that $\tilde\Delta^0=0$.\\
As a consequence of the positivity of $\Delta^k$, for any $k$, we have that
for all $k=1,\ldots,n$
\[
b_k\leq h^k_{d+d^\Lambda}
\]
on a compact symplectic $2n$-dimensional manifold.\\
Moreover the equalities 
\[
b_k=h^k_{d+d^\Lambda}, \qquad \forall k=1,\ldots,n,
\]
hold on a compact symplectic $2n$-dimensional manifold if and only if it 
satisfies the Hard-Lefschetz condition; namely 
the equality $b_\bullet= h^\bullet_{d+d^\Lambda}$ ensures
the bijectivity of the natural maps $H^\bullet_{d+d^\Lambda}(X)\longrightarrow
H^\bullet_{dR}(X)$, and hence the $dd^\Lambda$-lemma.\\
This Theorem can be inserted in the more general setting of generalized complex manifolds,
see \cite{chansuen} for more details.
\begin{rem}
Note that, as proved in \cite{angellatardini}, on a compact complex manifold
the equality between the dimensions of the Bott-Chern cohomology groups
and the Aeppli cohomology groups characterizes the $\del\delbar$-lemma;
nevertheless, the "analogous" condition on a compact symplectic manifold $X$, namely
$h^{\bullet}_{d+d^\Lambda}(X)=
h^{\bullet}_{dd^\Lambda}(X)$, is
always verified.
\end{rem}
Similarly to the complex case where $\Delta^2$ characterizes the K\"ahlerianity
of a compact complex surface, if $2n=4$ we want
to show that the only degree which characterizes HLC is $\tilde\Delta^2$. Indeed we
show the following
\begin{theorem}
Let $(X^{2n},\omega)$ be a compact symplectic manifold, then
$$\tilde\Delta^1=0.$$
\end{theorem}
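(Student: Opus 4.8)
The goal is to show $\tilde\Delta^1 := h^1_{d+d^\Lambda} - b_1 = 0$ on any compact symplectic manifold $(X^{2n},\omega)$, equivalently $h^1_{d+d^\Lambda} = b_1$. Since we already know $b_1 \leq h^1_{d+d^\Lambda}$ from the positivity of $\Delta^k$, it suffices to prove the reverse inequality $h^1_{d+d^\Lambda} \leq b_1$, i.e. that the natural map $H^1_{d+d^\Lambda}(X) \to H^1_{dR}(X)$ induced by the identity is injective. Concretely, I must show that a $1$-form $\alpha$ which is both $d$-closed and $d^\Lambda$-closed and which is $d$-exact must lie in $\Imm dd^\Lambda$.

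The plan is to exploit the low degree. Let $\alpha \in A^1(X)$ with $d\alpha = 0$, $d^\Lambda\alpha = 0$, and $\alpha = df$ for some function $f \in A^0(X)$; I want to produce $g \in A^0(X)$ with $\alpha = dd^\Lambda g$. First I would recall that on functions $d^\Lambda = [d,\Lambda] = -\Lambda d$, since $\Lambda$ annihilates $0$- and $1$-forms (as $\Lambda$ lowers degree by $2$). Hence $d^\Lambda f = -\Lambda df = -\Lambda\alpha = 0$ automatically, because $\alpha$ is a $1$-form and $\Lambda$ kills $1$-forms. This means $\alpha = df = d(f) = -dd^\Lambda(\text{?})$ is not immediate; the subtlety is that $dd^\Lambda f = -d\Lambda df = -d\Lambda\alpha = 0$, which is the wrong sign of information. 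So the naive choice $g = -f$ gives $dd^\Lambda g = 0$, not $\alpha$.

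The correct route uses the elliptic/Hodge-theoretic description rather than a direct primitive. The cleaner approach is to work with the harmonic representatives: by the isomorphism $\mathcal{H}^k_{d+d^\Lambda}(X) \simeq H^k_{d+d^\Lambda}(X)$ recalled earlier, a class in $H^1_{d+d^\Lambda}$ has a representative $\alpha \in \ker\Delta_{d+d^\Lambda}$. I would show that such an $\alpha$ is in fact $d$-harmonic (genuinely de Rham harmonic), giving an injection of the $d+d^\Lambda$-harmonic space in degree $1$ into the de Rham harmonic space. The key computation is that in degree $1$ the fourth-order operator $\Delta_{d+d^\Lambda}$ simplifies: because $d^\Lambda$ vanishes on $0$-forms and $\Lambda$ vanishes on $1$-forms, several of the six summands in $\Delta_{d+d^\Lambda}$ drop out when applied to a $1$-form, and the surviving terms force both $d\alpha = 0$ and $d^*\alpha = 0$. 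Thus $\alpha$ is de Rham harmonic, and distinct $d+d^\Lambda$-harmonic $1$-forms map to distinct de Rham classes, yielding $h^1_{d+d^\Lambda} \leq b_1$. Combined with $b_1 \leq h^1_{d+d^\Lambda}$ this gives equality and hence $\tilde\Delta^1 = 0$.

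The main obstacle is the explicit verification that the $1$-form harmonic space for $\Delta_{d+d^\Lambda}$ coincides with the de Rham harmonic $1$-forms. This requires carefully tracking which of the six terms in the definition of $\Delta_{d+d^\Lambda}$ act nontrivially on $A^1(X)$, using the degree-shifting properties ($d^\Lambda$ lowers degree by $1$, $\Lambda$ by $2$, so $d^\Lambda$ and $d^{\Lambda *}$ interact with low-degree forms in a constrained way) together with the symplectic identity $d^\Lambda = (-1)^{k+1}\star d\star$ to rewrite adjoints. I expect that after this bookkeeping the operator restricted to $1$-forms reduces to a positive combination of $d^*d$ and the de Rham Laplacian, so that $\Delta_{d+d^\Lambda}\alpha = 0$ forces $d\alpha = 0$ and $d^*\alpha = 0$; the degree being $1$ is exactly what makes these cancellations work, paralleling the fact that $\tilde\Delta^1$ need not vanish in the analogous complex Bott-Chern setting without extra hypotheses.
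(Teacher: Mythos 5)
Your overall strategy --- reduce to the inequality $h^1_{d+d^\Lambda}\le b_1$ (using the known $b_1\le h^1_{d+d^\Lambda}$ for the other direction) and obtain it by showing that $\Delta_{d+d^\Lambda}$-harmonic $1$-forms are de Rham harmonic --- is genuinely different from the paper's and is viable, but the key step is left unverified and the mechanism you propose for it does not work. Pairing each summand of $\Delta_{d+d^\Lambda}$ against $\alpha$ shows that, for a $1$-form, $\Delta_{d+d^\Lambda}\alpha=0$ is equivalent to
\[
d\alpha=0,\qquad d^\Lambda\alpha=0,\qquad (dd^\Lambda)^*\alpha=d^{\Lambda*}\bigl(d^*\alpha\bigr)=0 .
\]
So $d\alpha=0$ does come for free, but the third condition is $d^{\Lambda*}(d^*\alpha)=0$, not $d^*\alpha=0$, and no amount of degree bookkeeping upgrades the former to the latter: the operator does not reduce on $1$-forms to a positive combination of $d^*d$ and the de Rham Laplacian. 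What is actually needed is the global fact that $\ker d^{\Lambda*}\cap A^0(X)=\mathcal{H}^0_{d^\Lambda}(X)$ consists exactly of the constants; this follows from $\dim\mathcal{H}^0_{d^\Lambda}(X)=\dim H^0_{d^\Lambda}(X)=\dim H^{2n}_{dR}(X;\rr)=1$ (Brylinski's $\star$-duality) together with $\langle 1,d^\Lambda\beta\rangle=-\frac{1}{(n-1)!}\int_X d\beta\wedge\omega^{n-1}=0$ for every $1$-form $\beta$, which places the constants inside $\mathcal{H}^0_{d^\Lambda}(X)$. Granting this, $d^*\alpha$ is a constant, and since $\langle d^*\alpha,1\rangle=\langle\alpha,d1\rangle=0$ it vanishes; only then is $\alpha$ de Rham harmonic and your injection $\mathcal{H}^1_{d+d^\Lambda}(X)\hookrightarrow\mathcal{H}^1_{\Delta}(X)$ established. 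Your ``I expect that after this bookkeeping\ldots'' is precisely where the proof is missing, and the expectation as phrased is incorrect.

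For comparison, the paper avoids the harmonic theory of $\Delta_{d+d^\Lambda}$ entirely and proves directly that the natural map $H^1_{d+d^\Lambda}(X)\to H^1_{dR}(X)$ is injective (and surjective): if $\alpha=df$, the Hodge decomposition for $\Delta_{d^\Lambda}$ in degree zero gives $f=c+d^\Lambda\beta$ with $c$ constant, whence $\alpha=df=dd^\Lambda\beta$ is $dd^\Lambda$-exact. This is shorter and isolates the one nontrivial input --- the identification of the degree-zero $d^\Lambda$-harmonic space with the constants --- which your route needs as well but leaves hidden inside the unproven claim about $\Delta_{d+d^\Lambda}$ on $1$-forms.
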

\begin{proof}
First of all we prove that the natural map induced
by the identity
\[
H^1_{d+d^\Lambda}(X)\longrightarrow H^1_{dR}(X)
\]
is an isomorphism.\\
The surjectivity follows from \cite[Lemma 2.7]{debartolomeis-tomassini}.
For the sake of completeness we briefly recall here the proof. Let $\alpha$ be
a $d$-closed $1$-form, then
$$
d^\Lambda\alpha=\left[d,\Lambda\right]\alpha=-\Lambda d\alpha=0,
$$
i.e., $\alpha$ is also $d^\Lambda$-closed, proving the surjectivity.
We need to prove the injectivity.\\
Let $a=[\alpha]\in H^1_{d+d^\Lambda}(X)$ be such that $a=0$ in $H^1_{dR}(X)$, i.e.,
$\alpha=df$ for some $f\in\mathcal{C}^\infty(X)$.
Considering the Hodge decomposition of $f$ with respect to the $d^\Lambda$-cohomology (cf. \cite{tsengyauI})
we get $f=c+d^\Lambda\beta$ with $c$ constant and $\beta$ differential
$1$-form. Hence
\[
\alpha=df=d(c+d^\Lambda\beta)=dd^\Lambda\beta,
\]
i.e., $[\alpha]=0$ in $H^1_{d+d^\Lambda}(X)$.\\
As a consequence, $b_1=h^1_{d+d^\Lambda}$, implying
$\tilde\Delta^1=h^1_{d+d^\Lambda}-b_1=0$.
\end{proof}
\begin{rem}
We put in evidence the fact that $\tilde\Delta^1=0$ on a compact symplectic manifold of any dimension.
In the complex setting the analogue result holds only in complex dimension $2$
(see \cite{angella-dlousski-tomassini}).
\end{rem}
As a consequence we get the following
\begin{theorem}\label{numeri}
Let $(X^4,\omega)$ be a compact symplectic $4$-manifold, then
it satisfies
\[
HLC \iff \tilde\Delta^2=0 \iff b_2=h^2_{d+d^\Lambda}.
\]
\end{theorem}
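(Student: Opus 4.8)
The plan is to deduce the theorem from two facts already established above: the general characterization of the Hard Lefschetz condition in terms of the equalities $b_k=h^k_{d+d^\Lambda}$, and the dimension-independent vanishing $\tilde\Delta^1=0$ proved in the preceding theorem. Since $\tilde\Delta^2=h^2_{d+d^\Lambda}-b_2$ holds by the very definition of $\tilde\Delta^k$, the second equivalence $\tilde\Delta^2=0\iff b_2=h^2_{d+d^\Lambda}$ is tautological, so all the genuine content lies in the first equivalence $HLC\iff\tilde\Delta^2=0$.

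First I would specialize the general characterization recalled before the definition of $\tilde\Delta^k$ to the case $2n=4$, i.e. $n=2$. That characterization states that a compact symplectic $2n$-manifold satisfies the Hard Lefschetz condition if and only if $b_k=h^k_{d+d^\Lambda}$ for all $k=1,\ldots,n$, equivalently $\tilde\Delta^k=0$ for all $k=1,\ldots,n$. For $n=2$ this reduces to the pair of conditions $\tilde\Delta^1=0$ and $\tilde\Delta^2=0$.

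Next I would invoke the preceding theorem, which gives $\tilde\Delta^1=0$ on any compact symplectic manifold, hence in particular on $X^4$. Thus the condition $\tilde\Delta^1=0$ is automatically satisfied, and the conjunction of the two conditions collapses to $\tilde\Delta^2=0$ alone. This yields $HLC\iff\tilde\Delta^2=0$, and chaining it with the tautological equivalence $\tilde\Delta^2=0\iff b_2=h^2_{d+d^\Lambda}$ completes the proof.

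I do not anticipate any real obstacle here: the statement is essentially a bookkeeping corollary of the two inputs. The only point deserving care is that the general characterization demands the equality $b_k=h^k_{d+d^\Lambda}$ in \emph{every} degree $k=1,\ldots,n$, so one must verify that in dimension four no degree other than $k=2$ can obstruct HLC; this is exactly what $\tilde\Delta^1=0$ guarantees, while the higher degrees $k=3,4$ carry no additional information by the duality $\tilde\Delta^k=\tilde\Delta^{2n-k}$.
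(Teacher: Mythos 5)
Your argument is correct and is exactly the paper's intended route: the theorem is stated there as an immediate consequence of the general characterization of HLC by $b_k=h^k_{d+d^\Lambda}$ for $k=1,\ldots,n$ together with the preceding theorem giving $\tilde\Delta^1=0$, which in dimension four leaves only the degree-two condition. The observation that the second equivalence is definitional and that higher degrees are handled by duality matches the paper's bookkeeping.
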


\subsubsection{Example}

As shown in \cite{angella-tomassini-verbitsky} on a compact complex surface $\Delta^2\in\left\lbrace 0,2\right\rbrace$; we now provide an example of a compact symplectic $4$-manifold with
$\Delta^2\notin\left\lbrace 0,2\right\rbrace$, or equivalently $\tilde\Delta^2\notin\left\lbrace 0,1\right\rbrace$, showing hence a different behavior in the symplectic case.
More precisely, we compute the non-HLC degree $\tilde\Delta^2$ when $X$ is a compact
$4$-dimensional manifold diffeomorphic to
a solvmanifold $\Gamma\backslash G$ (i.e., the compact quotient of a connected
simply-connected solvable Lie group $G$ by a discrete cocompact subgroup
$\Gamma$) admitting a
left-invariant symplectic structure; for a partial computation cfr.
\cite[Table 2]{angellakasuya}.\\
According to \cite{bock} we have the following cases
\begin{itemize}
\item[a)] the \emph{primary Kodaira surface}
$\mathfrak{g}=\mathfrak{g}_{3,1}\oplus\mathfrak{g}_1=(0,0,0,23)$;
\item[b)] $\mathfrak{g}=\mathfrak{g}_{1}\oplus\mathfrak{g}_{3,4}^{-1}=
(0,0,-23,24)$;
\item[c)] $\mathfrak{g}=\mathfrak{g}_{4,1}=(0,0,12,13)$;
\item[d)] the \emph{torus} $\mathfrak{g}=4\mathfrak{g}_{1}=(0,0,0,0)$;
\item[e)] the \emph{hyper-ellptic surface}
$\mathfrak{g}=\mathfrak{g}_{1}\oplus\mathfrak{g}_{3,5}^0=(0,0,-24,23)$.
\end{itemize}
where $\mathfrak{g}$ is the Lie Algebra of $G$. The last two solvmanifolds admit
a K\"ahler structure.
\begin{theorem}
Let $X=\Gamma\backslash G$ be a compact solvmanifold of dimension $4$ with $\omega$
left-invariant symplectic structure.
Then, according to $\mathfrak{g}=Lie(G)$,
\begin{itemize}
\item[a)] if $\mathfrak{g}=\mathfrak{g}_{3,1}\oplus\mathfrak{g}_1$, then
$\tilde\Delta^2=1$;
\item[b)] if $\mathfrak{g}=\mathfrak{g}_{1}\oplus\mathfrak{g}_{3,4}^{-1}$, then
$\tilde\Delta^2=0$;
\item[c)] if $\mathfrak{g}=\mathfrak{g}_{4,1}$, then
$\tilde\Delta^2=2$.
\end{itemize}
In particular, every left-invariant symplectic structure in case $b)$ satisfies $HLC$.\\
Moreover, none of the symplectic structures in case $a)$ and $c)$ satisfy $HLC$ as
we expected, indeed the Lie algebra in these two cases is nilpotent
(\cite{benson-gordon}).
\end{theorem}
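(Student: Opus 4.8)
The plan is to reduce the computation of both $b_2$ and $h^2_{d+d^\Lambda}(X)$ to the finite-dimensional Chevalley--Eilenberg complex $\left(\Lambda^\bullet\g^*,d\right)$ of left-invariant forms. For the de Rham cohomology this reduction is classical: cases a) and c) are nilmanifolds, so Nomizu's theorem applies, whereas in case b) the Lie algebra $\g_1\oplus\g_{3,4}^{-1}$ is completely solvable --- the derivation $\ad_{e_2}$ acts on $\langle e_3,e_4\rangle$ with real eigenvalues $\pm 1$ --- so Hattori's theorem yields $H^\bullet_{dR}(X;\rr)\cong H^\bullet(\g)$. For the symplectic Bott--Chern groups I would invoke the analogous reduction for solvmanifolds, namely that the inclusion of left-invariant forms induces an isomorphism $H^\bullet_{d+d^\Lambda}(\g)\cong H^\bullet_{d+d^\Lambda}(X)$; this is available for the nilpotent and completely solvable examples at hand, as in the computations of \cite{angellakasuya}. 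Verifying the hypotheses of this reduction, in particular complete solvability in case b), is the first step.

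Once the problem is algebraic I would fix a coframe realizing the structure equations --- $de^4=e^{23}$ in a), $de^3=-e^{23}$ and $de^4=e^{24}$ in b), and $de^3=e^{12}$, $de^4=e^{13}$ in c) --- and parametrize the left-invariant symplectic forms $\omega$ as the $d$-closed elements of $\Lambda^2\g^*$ with $\omega^2\neq 0$. For each such $\omega$ I would write down the dual bivector, compute $\Lambda$ and hence $d^\Lambda=\left[d,\Lambda\right]$ on $\Lambda^\bullet\g^*$. In degree $2$ the operators $d$ and $d^\Lambda$ land in degrees $3$ and $1$ respectively, so $\ker(d+d^\Lambda)\cap\Lambda^2\g^*=\ker d\cap\ker d^\Lambda$, and $h^2_{d+d^\Lambda}=\dim\left(\ker d\cap\ker d^\Lambda\right)-\dim\left(\Imm dd^\Lambda\cap\Lambda^2\g^*\right)$ is read off from the ranks of a few explicit matrices. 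Subtracting $b_2$, which is computed directly from the $d$-cohomology of $\Lambda^\bullet\g^*$ ($b_2=4$ in a) and $b_2=2$ in b) and c)), then yields $\tilde\Delta^2$.

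Since $d^\Lambda$, and therefore $h^2_{d+d^\Lambda}$, a priori depends on the chosen $\omega$, I would next verify that the resulting dimension is the same for \emph{every} left-invariant symplectic form. I expect this to be the genuinely delicate point: the invariant symplectic forms constitute an open subset of a fixed vector space, and one must show that the ranks of the linear maps defining $\ker d^\Lambda$ and $\Imm dd^\Lambda$ are constant on it, either by exhibiting a normal form for $\omega$ under $\Aut(\g)$, or by a direct rank computation carrying the coefficients of $\omega$ as parameters.

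Granting this, the values $\tilde\Delta^2=1,0,2$ follow, and the Hard Lefschetz statements are immediate from Theorem \ref{numeri}: in case b) the equality $\tilde\Delta^2=0$ is precisely HLC, while in cases a) and c) the strict positivity of $\tilde\Delta^2$ shows it fails. This last conclusion is in fact forced independently of the explicit values, since a) and c) are non-toral nilmanifolds and by Benson--Gordon \cite{benson-gordon} such manifolds never satisfy the Hard Lefschetz condition.
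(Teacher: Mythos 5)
Your proposal follows essentially the same route as the paper: reduce to the finite-dimensional complex of left-invariant forms via \cite{angellakasuya}, compute $h^2_{d+d^\Lambda}$ for an arbitrary parametrized left-invariant symplectic form $\omega$ (which is exactly how the paper handles the independence-of-$\omega$ issue you flag as delicate), and subtract $b_2$, with the HLC conclusions then following from Theorem \ref{numeri} and Benson--Gordon. The only difference is that the paper actually records the resulting dimensions ($h^2_{d+d^\Lambda}=5,2,4$ in cases a), b), c)), whereas you leave the final rank computations to be carried out.
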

\begin{proof}
In each case we will denote with $\left\{e^1,\ldots, e^6\right\}$ a left-invariant co-frame
on $\Gamma\backslash G$. For brevity of notations we put $e^{12}:=e^1\wedge e^2$
and so on.\\
\textbf{a)} 
The structure equations on the primary Kodaira surface are
\[
\left\{\begin{array}{rcl}
            d e^1 &=&   0 \\[5pt]
            d e^2 &=&   0  \\[5pt]
            d e^3 &=&   0  \\[5pt]
            d e^4 &=&   e^{23}  \\[5pt]
       \end{array}\right. \;.
\]
Consider an arbitrary left-invariant $d$-closed $2$-form
\[
\omega=\lambda_{12}e^{12}+\lambda_{13}e^{13}+\lambda_{23}e^{23}+
\lambda_{24}e^{24}+\lambda_{34}e^{34};
\]
$\omega$ is symplectic if and only if $\lambda_{12}\lambda_{34}+
\lambda_{13}\lambda_{24}\neq 0$.
According to \cite{angellakasuya} we can compute the symplectic cohomology groups by just using
invariant forms, in particular we obtain $h^2_{d+d^\Lambda}=5$. Therefore
$$
\tilde\Delta^2=h^2_{d+d^\Lambda}-b_2=5-4=1.
$$
\textbf{b)}
The structure equations in this case are
\[
\left\{\begin{array}{rcl}
            d e^1 &=&   0 \\[5pt]
            d e^2 &=&   0  \\[5pt]
            d e^3 &=&   -e^{23}  \\[5pt]
            d e^4 &=&   e^{24}  \\[5pt]
       \end{array}\right. \;.
\]
Consider an arbitrary left-invariant $d$-closed $2$-form
\[
\omega=\lambda_{12}e^{12}+\lambda_{23}e^{23}+
\lambda_{24}e^{24}+\lambda_{34}e^{34};
\]
$\omega$ is symplectic if and only if $\lambda_{12}\neq 0$ and $\lambda_{34}\neq 0$.
With the previous argument we get $h^2_{d+d^\Lambda}=2$. Therefore
$$
\tilde\Delta^2=h^2_{d+d^\Lambda}-b_2=2-2=0.
$$
\textbf{c)}
The structure equations in this case are
\[
\left\{\begin{array}{rcl}
            d e^1 &=&   0 \\[5pt]
            d e^2 &=&   0  \\[5pt]
            d e^3 &=&   e^{12}  \\[5pt]
            d e^4 &=&   e^{13}  \\[5pt]
       \end{array}\right. \;.
\]
Consider an arbitrary left-invariant $d$-closed $2$-form
\[
\omega=\lambda_{12}e^{12}+\lambda_{13}e^{13}+\lambda_{14}e^{14}+
\lambda_{23}e^{23};
\]
$\omega$ is symplectic if and only if $\lambda_{14}\neq 0$ and $\lambda_{23}\neq 0$.
With the previous argument we get $h^2_{d+d^\Lambda}=4$. Therefore
$$
\tilde\Delta^2=h^2_{d+d^\Lambda}-b_2=4-2=2.
$$
\end{proof}

\subsection{Behavior under proper surjective maps}

As already recalled, K\"ahlerianity (or equivalently the $\del\delbar$-lemma) is invariant under modification of compact complex surfaces; we give some conditions under which that the $dd^\Lambda$-lemma is invariant under proper, surjective, symplectic-structure-preserving maps between $4$-dimensional symplectic manifolds.
First of all we prove the following
\begin{prop}\label{HLC}
Let $\pi:(\tilde X^{2n},\tilde\omega)\longrightarrow (X^{2n},\omega)$ be a smooth, proper, surjective map such that $\pi^*\omega=\tilde\omega$. If
$\tilde X$ satisfies HLC then $X$ satisfies HLC.
\end{prop}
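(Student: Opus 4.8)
The plan is to exploit the hypothesis $\pi^*\omega=\tilde\omega$, which forces $\pi^*$ to intertwine the two Lefschetz operators, and then to transport the Hard Lefschetz isomorphisms from $\tilde X$ back to $X$ along the injective map $\pi^*$ on de Rham cohomology.

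First I would verify that $\pi$ has nonzero degree, so that Wells' injectivity applies. Since $\omega$ is symplectic, $\omega^n$ is a volume form on $X$, and $\pi^*(\omega^n)=(\pi^*\omega)^n=\tilde\omega^n$ is a volume form on $\tilde X$; orienting $X$ and $\tilde X$ by these top powers makes $\pi$ orientation preserving, whence $\deg\pi>0$. By the injectivity recalled in Section \ref{preliminaries} (\cite[Theorem 3.3]{wells}) the induced map $\pi^*:H^\bullet_{dR}(X;\rr)\longrightarrow H^\bullet_{dR}(\tilde X;\rr)$ is then injective.

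Next, from $\pi^*\omega=\tilde\omega$ one gets, for every form $\alpha$ on $X$, the identity $\pi^*(\omega\wedge\alpha)=\tilde\omega\wedge\pi^*\alpha$; since $d\omega=0=d\tilde\omega$, the Lefschetz operators descend to cohomology and $\pi^*$ intertwines them there. Thus for each $0\le k\le n$ there is a square, commuting through the vertical maps $\pi^*$, relating $L^k:H^{n-k}_{dR}(X)\to H^{n+k}_{dR}(X)$ with $L^k:H^{n-k}_{dR}(\tilde X)\to H^{n+k}_{dR}(\tilde X)$. If $a\in H^{n-k}_{dR}(X)$ satisfies $L^k a=0$, then $L^k(\pi^*a)=\pi^*(L^k a)=0$; since $\tilde X$ satisfies HLC the bottom map is injective, so $\pi^*a=0$, and injectivity of $\pi^*$ forces $a=0$. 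Hence every $L^k:H^{n-k}_{dR}(X)\to H^{n+k}_{dR}(X)$ is injective.

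Finally I would upgrade injectivity to bijectivity. Since $X$ is compact, Poincar\'e duality gives $b_{n-k}(X)=b_{n+k}(X)$, so each $L^k$ is an injective linear map between finite-dimensional vector spaces of equal dimension, hence an isomorphism for every $k$; this is precisely the Hard Lefschetz condition for $(X,\omega)$. The only genuinely delicate point is the first step, namely ensuring $\deg\pi\neq 0$ so that Wells' injectivity is available; the compatibility $\pi^*\omega=\tilde\omega$ settles this cleanly through the volume forms $\omega^n$ and $\tilde\omega^n$, and the remainder is a diagram chase together with a dimension count via Poincar\'e duality.
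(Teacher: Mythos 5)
Your proof is correct and follows essentially the same route as the paper: the commutative square in which $\pi^*$ intertwines the Lefschetz maps $[\omega^k]$ and $[\tilde\omega^k]$, combined with Wells' injectivity of $\pi^*$ on de Rham cohomology. You additionally spell out two points the paper leaves implicit --- that $\pi^*\omega=\tilde\omega$ forces $\deg\pi\neq 0$ via the symplectic orientations, and that injectivity of $L^k$ on $X$ upgrades to bijectivity by Poincar\'e duality --- both of which are needed and correctly handled.
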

\begin{proof}
%The hypothesis $\pi^*\omega=\tilde\omega$ guarantees that the induced map
%$$
%\pi^*:H_{d+d^\Lambda}^\bullet(X)\longrightarrow H_{d+d^{\tilde\Lambda}}^\bullet(\tilde X)
%$$
%is well-defined.
Consider the following diagram
$$\xymatrixcolsep{5pc}\xymatrix{
H^{n-k}_{dR}(X;\rr) \ar[r]^{\left[\omega^k\right]} \ar[d]^{\pi^*}&
H^{n+k}_{dR}(X;\rr)\ar[d]^{\pi^*}\\
H^{n-k}_{dR}(\tilde X;\rr) \ar[r]^{\left[\tilde\omega^k\right]}   &
H^{n+k}_{dR}(\tilde X;\rr);
}
$$
this is commutative since, by hypothesis, $\pi^*(\omega^k)=\tilde\omega^k$.
By Wells' theorem the maps $\pi^*$ on the left and on the right of the diagram are injective for any $k$. Then the bijectivity of $\left[\tilde\omega^k\right]$ forces the bijectivity of
$\left[\omega^k\right]$.
\end{proof}
We ask whether there exists an example of
a smooth, proper, surjective map
$\pi:(\tilde X^{2n},\tilde\omega)\longrightarrow (X^{2n},\omega)$ with $\tilde X$ which does not satisfy HLC but $X$ does.\\
We prove that the converse of Proposition \ref{HLC} is true when $\tilde X$ and $X$ are $4$-dimensional manifolds with the same first Betti number. This hypothesis in Theorem \ref{modification} is unnecessary since $b_1$ is invariant under bimeromorphisms
and K\"ahlerianity is in fact characterized in terms of $b_1$.
\begin{prop}
Let $\pi:(\tilde X^{4},\tilde\omega)\longrightarrow (X^{4},\omega)$ be a smooth, proper, surjective map such that $\pi^*\omega=\tilde\omega$. 
Suppose that $b_1(\tilde X)=b_1(X)$, then
$\tilde X$ satisfies HLC if and only if $X$ satisfies HLC.
\end{prop}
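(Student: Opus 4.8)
The implication ``$\tilde X$ satisfies HLC $\Rightarrow X$ satisfies HLC'' is precisely Proposition \ref{HLC} (which holds in any dimension and without the hypothesis on $b_1$), so the plan is to establish the converse: if $X$ satisfies HLC then $\tilde X$ does. Throughout, $X$ and $\tilde X$ are the compact symplectic $4$-manifolds of this subsection, hence orientable via $\omega^2$ and $\tilde\omega^2$.

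My first step is to reduce the Hard Lefschetz condition to a single map. On any compact symplectic $4$-manifold $(Y,\sigma)$, HLC requires that $L^k\colon H^{2-k}_{dR}(Y;\rr)\to H^{2+k}_{dR}(Y;\rr)$ be an isomorphism for $k=0,1,2$. The case $k=0$ is the identity, and the case $k=2$ is the map $H^0(Y;\rr)\cong\rr\to H^4(Y;\rr)\cong\rr$ sending $1$ to the class of the volume form $\sigma^2$, which is an isomorphism; both are therefore automatic. Consequently HLC on $(Y,\sigma)$ is equivalent to
\[
L\colon H^1_{dR}(Y;\rr)\longrightarrow H^3_{dR}(Y;\rr),\qquad [\alpha]\mapsto[\alpha\wedge\sigma],
\]
being an isomorphism.

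Next I would upgrade Wells' injectivity to an isomorphism in degrees $1$ and $3$. Since $\pi^*\omega=\tilde\omega$ is nondegenerate, $\int_{\tilde X}(\pi^*\omega)^2=\deg\pi\int_X\omega^2$ is a nonzero multiple of the symplectic volume, so $\deg\pi\neq0$ and Wells' Theorem gives that $\pi^*\colon H^\bullet_{dR}(X;\rr)\to H^\bullet_{dR}(\tilde X;\rr)$ is injective. In degree $1$ this is an injection between spaces of dimensions $b_1(X)$ and $b_1(\tilde X)$, equal by hypothesis, so $\pi^*\colon H^1_{dR}(X;\rr)\to H^1_{dR}(\tilde X;\rr)$ is an isomorphism. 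By Poincar\'e duality on the compact oriented $4$-manifolds one has $b_3(X)=b_1(X)=b_1(\tilde X)=b_3(\tilde X)$, whence the injection $\pi^*\colon H^3_{dR}(X;\rr)\to H^3_{dR}(\tilde X;\rr)$ is also an isomorphism.

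Finally I would invoke the commutative square of Proposition \ref{HLC} in the case $n=2$, $k=1$: its vertical maps are the two isomorphisms $\pi^*$ just obtained, and its top row is $L\colon H^1_{dR}(X;\rr)\to H^3_{dR}(X;\rr)$, an isomorphism because $X$ satisfies HLC. Commutativity then forces the bottom row $L\colon H^1_{dR}(\tilde X;\rr)\to H^3_{dR}(\tilde X;\rr)$ to be an isomorphism, i.e.\ $\tilde X$ satisfies HLC. (Since both vertical maps are isomorphisms, the square in fact exhibits the two Lefschetz maps as simultaneously invertible, recovering both implications at once.) The one delicate point is precisely this promotion of $\pi^*$ from injective to bijective: it rests on the assumption $b_1(\tilde X)=b_1(X)$ together with Poincar\'e duality in degree $3$, which is exactly the role played by the $b_1$-hypothesis here and which is unnecessary in Theorem \ref{modification}, where $b_1$ is automatically preserved.
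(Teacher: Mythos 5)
Your proof is correct and follows essentially the same route as the paper: the commutative square $L\circ\pi^*=\pi^*\circ L$ from Proposition \ref{HLC}, Wells' injectivity, and the hypothesis $b_1(\tilde X)=b_1(X)$ to promote $\pi^*$ to an isomorphism in degree $1$. If anything, your version is slightly more complete, since you make explicit the reduction of HLC on a $4$-manifold to the single map $L\colon H^1\to H^3$ and the use of Poincar\'e duality ($b_3=b_1$) to get bijectivity, points the paper leaves implicit.
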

\begin{proof}
Consider the following commutative diagram
$$\xymatrixcolsep{5pc}\xymatrix{
H^{1}_{dR}(X;\rr) \ar[r]^{\left[\omega\right]} \ar[d]^{\pi^*}&
H^{3}_{dR}(X;\rr)\ar[d]^{\pi^*}\\
H^{1}_{dR}(\tilde X;\rr) \ar[r]^{\left[\tilde\omega\right]}   &
H^{3}_{dR}(\tilde X;\rr)
}
$$
By Wells' theorem the maps $\pi^*$ on the left and on the right of the diagram are injective. Moreover, by the hypothesis on $b_1$, we have that the map $\pi^*$ on the left is an isomorphism.\\
Suppose that $X$ satisfies HLC, we have to prove that $\tilde X$ satisfies HLC too.
Let $\left[\alpha\right]\in H^{1}_{dR}(\tilde X;\rr)$ such that
$0=\left[\tilde\omega\wedge\alpha\right]\in H^{3}_{dR}(\tilde X;\rr)$. Then,
$0=\left[\tilde\omega\wedge\alpha\right]=
\pi^*\left(\left[\omega\right]\cup\left(\pi^{*^{-1}}\left[\alpha\right]\right)\right)$, hence
$0=\left[\omega\right]\cup\left(\pi^{*^{-1}}\left[\alpha\right]\right)$
since $\pi^*:H^3_{dR}(X)\longrightarrow H^3_{dR}(\tilde X)$ is injective.
The manifold $X$ satisfies HLC, so $\pi^{*^{-1}}\left[\alpha\right]=0$ and therefore $\left[\alpha\right]=0$, indeed
$\pi^*:H^1_{dR}(X)\longrightarrow H^1_{dR}(\tilde X)$ is bijective.
\end{proof}
%Then we have the following
%\begin{cor}
%The triviality of $\tilde\Delta^k$ is invariant under smooth, proper, surjective, symplectic maps between symplectic equi-dimensional manifolds.
%\end{cor}
We ask whether the assumption on $b_1$ can be dropped.\\
The following is an analogue of Theorem \ref{injectivity} in this context.
\begin{theorem}\label{symplectic}
Let $\pi:(\tilde X^{2n},\tilde\omega)\longrightarrow (X^{2n},\omega)$ be a smooth, proper, surjective map between two equi-dimensional symplectic manifolds such that $\pi^*\omega=\tilde\omega$. If $X$ satisfies the $dd^\Lambda$-lemma then
$$
\pi^*:H^k_{d+d^\Lambda}(X)\longrightarrow H^k_{d+d^\Lambda}(\tilde X)
$$
is injective for any $k$.
\end{theorem}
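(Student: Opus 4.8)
The plan is to reduce the injectivity of $\pi^*$ on symplectic Bott--Chern cohomology to the injectivity of $\pi^*$ on de Rham cohomology, which we already have from Wells' theorem, using the $dd^\Lambda$-lemma on $X$ as the bridge via the natural maps induced by the identity.

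First I would make sure $\pi^*$ genuinely induces a map $H^k_{d+d^\Lambda}(X)\to H^k_{d+d^\Lambda}(\tilde X)$. Since $\pi^*\omega=\tilde\omega$ and both forms are symplectic, $\pi^*(\omega^n)=\tilde\omega^n$ is nowhere vanishing, so $d\pi$ is everywhere injective; as the two manifolds are equidimensional, $\pi$ is a local symplectomorphism. Hence $\pi^*$ commutes with $L$, with $\Lambda$ and with the symplectic Hodge star, so that $\pi^*d^\Lambda=d^\Lambda\pi^*$, and of course $\pi^*d=d\pi^*$. It follows that $\pi^*$ maps $\ker(d+d^\Lambda)$ into $\ker(d+d^\Lambda)$ and $\Imm dd^\Lambda$ into $\Imm dd^\Lambda$ (if $\eta=dd^\Lambda\gamma$ then $\pi^*\eta=dd^\Lambda(\pi^*\gamma)$), so the induced map on cohomology is well defined. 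Moreover $\deg\pi\neq 0$, so by Wells' theorem $\pi^*\colon H^k_{dR}(X;\rr)\to H^k_{dR}(\tilde X;\rr)$ is injective.

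Next I would consider the commutative square
$$\xymatrixcolsep{5pc}\xymatrix{
H^{k}_{d+d^\Lambda}(X) \ar[r]^{\pi^*} \ar[d]^{\rho_X}&
H^{k}_{d+d^\Lambda}(\tilde X)\ar[d]^{\rho_{\tilde X}}\\
H^{k}_{dR}(X;\rr) \ar[r]^{\pi^*}   &
H^{k}_{dR}(\tilde X;\rr),
}$$
whose vertical arrows are the natural maps induced by the identity and whose horizontal arrows are $\pi^*$. Commutativity is immediate at the level of representatives, since passing from a $(d+d^\Lambda)$-closed form to its de Rham class commutes with $\pi^*$. Because $X$ satisfies the $dd^\Lambda$-lemma, the left vertical map $\rho_X$ is injective: if $\alpha$ represents a class in its kernel, then $\alpha$ is $d^\Lambda$-closed and $d$-exact, hence $dd^\Lambda$-exact by the lemma, so its class in $H^k_{d+d^\Lambda}(X)$ vanishes.

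The conclusion is then a short diagram chase, and this is the only place where the hypotheses combine: if $a\in H^k_{d+d^\Lambda}(X)$ satisfies $\pi^*a=0$, then $\pi^*(\rho_X a)=\rho_{\tilde X}(\pi^* a)=0$; injectivity of $\pi^*$ on de Rham cohomology forces $\rho_X a=0$, and injectivity of $\rho_X$ forces $a=0$. Note that we do not need $\rho_{\tilde X}$ to be injective, so no Lefschetz hypothesis on $\tilde X$ is required. I expect the one step deserving genuine care to be the verification that $\pi^*$ commutes with $d^\Lambda$, i.e.\ that a local symplectomorphism respects the full $L,\Lambda$ structure; everything else is the formal interplay of Wells' theorem with the $dd^\Lambda$-lemma characterization of the Hard Lefschetz condition.
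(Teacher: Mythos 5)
Your proposal is correct and follows essentially the same route as the paper: the same commutative square relating $\pi^*$ on symplectic Bott--Chern and de Rham cohomology via the natural maps induced by the identity, with the $dd^\Lambda$-lemma giving injectivity of the left vertical map and Wells' theorem giving injectivity of $\pi^*$ on de Rham cohomology. Your extra care in checking that $\pi$ is a local symplectomorphism (so that $\pi^*$ commutes with $\Lambda$ and $d^\Lambda$ and the induced map is well defined) is a welcome elaboration of a point the paper dispatches in one line.
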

\begin{proof}
The hypothesis $\pi^*\omega=\tilde\omega$ guarantees that the map $\pi^*$ is well-defined. Consider the following commutative diagram
$$\xymatrixcolsep{5pc}\xymatrix{
H^{k}_{d+d^\Lambda}(X) \ar[r]^{\pi^*_{d+d^\Lambda}} \ar[d]^{i}&
H^{k}_{d+d^\Lambda}(\tilde X)\ar[d]^{\tilde i}\\
H^{k}_{dR}(X;\rr) \ar[r]^{\pi^*_{dR}}   &
H^{k}_{dR}(\tilde X;\rr)
}
$$
where $i$, $\tilde i$ are the natural maps induced by the identity and
we denote with $\pi^*_{d+d^\Lambda}$,
$\pi^*_{dR}$ the maps induced by $\pi$ on the two cohomologies. If $X$ satisfies
the $dd^\Lambda$-lemma then $i$ is bijective.
Fix $k$ and take $\left[\alpha\right]\in H^{k}_{d+d^\Lambda}(X)$ such that
$0=\left[\pi^*_{d+d^\Lambda}(\alpha)\right]\in H^{k}_{d+d^\Lambda}(\tilde X)$.
Then $0=\tilde i\left(\left[\pi^*_{d+d^\Lambda}(\alpha)\right]\right)
=
\pi^*_{dR}\left(i\left(\left[\alpha\right]\right)\right)\in H^{k}_{dR}(\tilde X;\rr)$.
Since by Wells' Theorem $\pi^*_{dR}$ is injective,
$0=i\left(\left[\alpha\right]\right)\in H^{k}_{dR}(X;\rr)$, hence
$0=\left[\alpha\right]\in H^{k}_{d+d^\Lambda}(X)$ showing the injectivity
of $\pi^*_{d+d^\Lambda}$.
\end{proof}
Of course, if $\tilde X$ satisfies the $dd^\Lambda$-lemma the same result hold,
by using Proposition \ref{HLC}.\\
It could be interesting to understand whether it is possible to avoid the HLC assumption.\\

\end{document}